\definecolor{orange}{HTML}{d9944f}
\renewcommand{\d}[1]{\textnormal{d}#1}
\newtheorem{theorem}{Theorem}[section]
\newtheorem{lemma}[theorem]{Lemma}
\newtheorem{remark}[theorem]{Remark}
\newtheorem{theorem-appendix}{Theorem A.\ignorespaces}
\newtheorem{lemma-appendix}[theorem-appendix]{Lemma A.\ignorespaces}
\newtheorem{remark-appendix}[theorem-appendix]{Remark A.\ignorespaces}
\newcommand{\AW}[1]{{\color{blue}#1}}
\journal{Applied Mathematics and Computation}
\begin{document}

\begin{frontmatter}



\title{A nonstandard numerical scheme for a novel SECIR integro-differential equation-based model allowing nonexponentially distributed stay times}


\author[label1]{Anna Wendler\hspace{0.5mm}\orcidlink{0000-0002-1816-8907}}
\ead{anna.wendler@dlr.de}
\author[label1]{Lena Pl\"otzke\hspace{0.5mm}\orcidlink{0000-0003-0440-1429}}
\author[label1]{Hannah Tritzschak}
\author[label1,label2]{Martin J. Kühn\hspace{0.5mm}\orcidlink{0000-0002-0906-6984}}
\ead{martin.kuehn@dlr.de}

\affiliation[label1]{organization={Institute of Software Technology, Department of High-Performance Computing, German Aerospace Center},
            city={Cologne},
            country={Germany}}

\affiliation[label2]{organization={Bonn Center for Mathematical Life Sciences and Life and Medical Sciences Institute, University of Bonn, Bonn, Germany},
            city={Bonn},
            country={Germany}}

\begin{abstract}
Ordinary differential equations (ODE) are a popular tool to model the spread of infectious diseases, yet they implicitly assume an exponential distribution to describe the flow from one infection state to another. However, scientific experience yields more plausible distributions where the likelihood of disease progression or recovery changes accordingly with the duration spent in a particular state of the disease. Furthermore, transmission dynamics depend heavily on the infectiousness of individuals. The corresponding nonlinear variation with the time individuals have already spent in an infectious state requires more realistic models. The previously mentioned items are particularly crucial when modeling dynamics at change points such as the implementation of nonpharmaceutical interventions. In order to capture these aspects and to enhance the accuracy of simulations, integro-differential equations (IDE) can be used.  

In this paper, we propose a generalized model based on integro-differential equations with eight infection states. The model allows for variable stay time distributions and generalizes the concept of ODE-based models as well as IDE-based age-of-infection models. In this, we include particular infection states for severe and critical cases to allow for surveillance of the clinical sector, avoiding bottlenecks and overloads in critical epidemic situations.

On the other hand, a drawback of IDE-based models \AW{is that efficient numerical solvers are not as widely available as for ODE systems} and tailored schemes might be needed. We will extend a recently introduced nonstandard numerical scheme to solve a simpler IDE-based model. This scheme is adapted to our more advanced model and we prove important mathematical and biological properties for the numerical solution. Furthermore, we validate our approach numerically by demonstrating the convergence rate. Eventually, we also show that our novel model is intrinsically capable of better assessing disease dynamics upon the introduction of nonpharmaceutical interventions.


\end{abstract}


\begin{highlights}
\item A SECIR-type integro-differential equation (IDE) based model allowing for nonexponential stay times
\item Developing a nonstandard numerical solution scheme for IDE based models
\item Proving mathematical-biological properties for the numerical solution scheme and demonstrating convergence rate
\item Demonstrating the impact of realistic distributions for nonpharmaceutical interventions
\end{highlights}

\begin{keyword}
integro-differential equations \sep infectious disease modeling \sep numerical analysis \sep numerical scheme \sep nonexponential stay times \sep age-of-infection model


\MSC 65R99 \sep 65Z05 \sep 45J05 \sep 92-10
\end{keyword}

\end{frontmatter}

\section{Introduction}
As recently demonstrated by SARS-CoV-2, outbreaks of infectious diseases can put humankind and human societies at immense difficulties through highly impacting personal rights and individual health, as well as state economies and public health. 

Mathematical models are an invaluable tool for predicting the spread of infectious diseases and the impact of nonpharmaceutical interventions. They provide the basis for developing appropriate mitigation strategies. There are a variety of approaches that can be employed. Models based on ordinary differential equations, or more generally ODE-based metapopulation models, are widely used~\cite{giordano_modelling_2020,bauer_relaxing_2021,schuler_data_2021,koslow_appropriate_2022,contento_integrative_2023} and can integrate many things such as mobility and waning immunity, see, e.g.,~\cite{zunker_novel_2024} or reinfections by novel variants, see, e.g.,~\cite{merkt_long-term_2024}. On a finer scale, agent-based models (ABMs) can be used to model individual courses of disease, see, e.g.,~\cite{bershteyn_implementation_2018,kerr_covasim_2021,muller_predicting_2021,bracher_pre-registered_2021,KKN24}. However, ABMs come with high-computational costs. To overcome these issues, several hybrid approaches have been proposed recently, see, e.g.,~\cite{bicker_hybrid_2025,maier_hybrid_2024}.

 To support decision makers in mitigating infectious disease spread, mathematical models should furthermore be integrated into automatic pipelines to provide permanently updated data~\cite{memonAutomatedProcessingPipelines2024} that are embedded in a visual analytics toolkit; cf.~\cite{betzESIDExploringDesign2023a}. 

Models based on ordinary differential equations are especially popular as they are mathematically well understood and suitable numerical solvers are widely available. However, these models implicitly assume an exponential stay time distribution in the compartments which was found unrealistic from an epidemiological point of view~\cite{wearing_appropriate_2005,donofrio_mixed_2004, faesTimeSymptomOnset2020, challenMetaanalysisSevereAcute2022, madewellSerialIntervalIncubation2023}. The resulting implications can be, e.g., underestimated basic reproduction numbers~\cite{wearing_appropriate_2005} that eventually lead to insufficient mitigation action.

The presented need for more general models that can naturally consider arbitrary stay time distributions leads to integro-differential equations, in particular form also sometimes denoted age-of-infection models~\cite{brauer_age_2009}. Already in $1927$, Kermack and McKendrick~\cite{kermack_contribution_1927} presented a general model using integro-differential equations. However, their paper is mostly cited for their simple ODE model~\cite{diekmann_legacy_1995}. Recently, different authors formulated age of infection models, see, e.g.,~\cite{messina_non-standard_2022, brauer_mathematical_2019, baiAgeofinfectionModelBoth2023, barrilReproductionNumberAge2021, demongeotKermackMcKendrickModel2023}. However, none of these models is as detailed as our presented model in terms of the compartments and the parameters used.

Our aim is to propose a comprehensive model where we explicitly model the number of patients needing hospitalization and intensive care, using transition distributions for the differently severe infection states,  and demonstrate how it can be used for realistic simulations. For the novel model, we extend a numerical scheme based on the approach introduced in~\cite{messina_non-standard_2022} to solve our model equations. We further show that by using a backwards finite difference scheme in an appropriate way, the original discretization scheme can be replaced equivalently by a computationally much more efficient one. We prove certain important mathematical and biological properties of the numerical solution such as mass-conservation, positivity-preservation and convergence of solution elements to some final size. Furthermore, we validate our approach numerically by demonstrating the convergence rate. Eventually, we compare our model to an ODE model for epidemic outbreaks to demonstrate the advancement also in a realistic context. 

This paper is structured as follows. In~\cref{sec:IDE-SECIR}, we introduce a novel IDE-SECIR-type model. In~\cref{sec:discretization}, we extend the nonstandard numerical scheme to solve our model equations. Thereby, we provide two discretizations which are equivalent under certain conditions and finally, we show how the discretized scheme can be initialized from reported case data. In the theoretical results in~\cref{sec:theory}, we prove that the discretization scheme is mass-conserving and preserves positivity of the solution and some statements on the discrete final size. In~\cref{sec:num}, we show the convergence of the numerical scheme and compare the IDE-based model to a corresponding ODE-based model with respect to behavior at change points, as well as their prediction of infection dynamics of COVID-19. Finally, we will discuss our results, present limitations and provide a conclusion.

\section{A novel SECIR-type IDE-based model}\label{sec:IDE-SECIR}\FloatBarrier
In this section, we introduce a generalized age-of-infection SECIR-type model allowing for a-, pre- and symptomatic transmission. The model is formulated using integro-differential equations, which is why we call it SECIR-type IDE-based model, or simply IDE model.
\begin{figure}
\centering
\scalebox{0.9}{
 \tikzstyle{roundbox}=[draw, fill=teal!20, rounded corners=10pt, minimum height =2.1cm, minimum width =3cm,text width = 2.5cm, align = center ]
	\tikzstyle{arrow}=[->, black, thick, text = black]
\begin{tikzpicture}[scale=0.5,every node/.style={scale=0.8}]<1
	\node [roundbox] (s) {Susceptible \\ $\mathbf{S}$};
	\node [roundbox, right = 3cm of s] (e) {Exposed, \\ Not Infectious \\ $\mathbf{E}$};
	\node [roundbox, right = 3cm of e,fill=red!20] (c) {Infectious, \\ No Symptoms \\  $\mathbf{C}$};
	\node [roundbox, below = 1.4cm of c,fill=red!20] (i) {Infectious, Symptoms\\  \textbf{$\mathbf{I}$}};
	\node [roundbox, below = 1.4cm of i] (h) {Infected, Severe \\ \textbf{$\mathbf{H}$}};
	\node [roundbox, left = 3cm of h] (u) {Infected, Critical \\ 
		\textbf{$\mathbf{U}$}};
	\node [roundbox, below = 1.4cm of s] (r) {Recovered \\ $\mathbf{R}$};
 \coordinate[xshift=-1mm,yshift=+1mm] (faker) at (r.south east);
	\node [roundbox, left = 3cm of u] (d) {Dead \\ $\mathbf{D}$};

	\draw [arrow] (s) -- node [above, font=\Large] {$\sigma_{S}^{E}$} (e);
	\draw [arrow] (e) -- node [above, font=\Large] {$\sigma_{E}^{C}$}(c);
	\draw [arrow] (c) -- node [right, font=\Large] {$\sigma_{C}^{I}$} (i);
	\draw [arrow] (c) -- node [right,xshift=0.9cm, font=\Large] {$\sigma_{C}^{R}$} (r);
	\draw [arrow] (i) -- node [above, font=\Large] {$\sigma_{I}^{R}$}(r);
	\draw [arrow] (i) -- node [right, font=\Large] {$\sigma_{I}^{H}$} (h);
	\draw [arrow] (h) -- node [below, font=\Large] {$\sigma_{H}^{U}$} (u);
	
	\draw [arrow] (h) -- node [above, font=\Large] {$\sigma_{H}^{R}$} (r);
	\draw [arrow] (u) -- node [below, font=\Large] {$\sigma_{U}^{D}$} (d);
	\draw [arrow] (u.north) -- node [below,xshift=-0.2cm, font=\Large] {$\sigma_{U}^{R}$} (faker);
\end{tikzpicture}}
 \caption{\textbf{Structure of the IDE model.} Schematic representation of the compartments and the transitions between the compartments in the
IDE model. The states in which
individuals are infectious are highlighted in red.}
\label{fig:IDESECIR}
\end{figure}
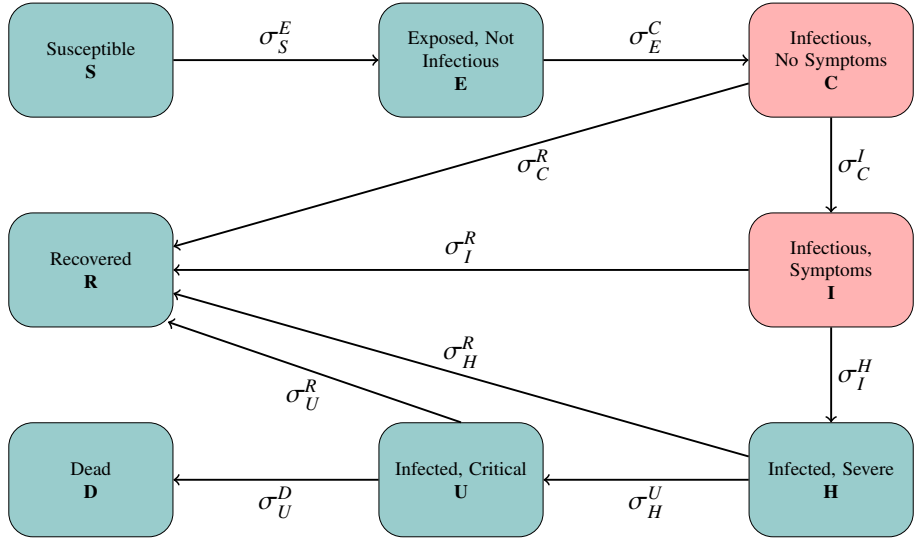

The model uses eight compartments, namely \textit{Susceptible}~($S$), for people who have not yet been infected with the disease and can therefore still be infected; \textit{Exposed}~($E$), who are infected but are not yet infectious to others; \textit{Carrier}~($C$), who are infected and infectious but do not show symptoms (they may be pre- or asymptomatic); \textit{Infected}~($I$), who are infected, infectious and show symptoms; \textit{Hospitalized}~($H$), who experience a severe development of the disease; \textit{In Intensive Care Unit}~($U$); \textit{Dead}~($D$); and \textit{Recovered}~($R$), who have gained full immunity. We define $\mathcal{Z}:=\{S, E, C, I, H, U, R, D\}$ as the set of compartments. For simplifying the notation, we use $Z\in\mathcal{Z}$ as indices to state-specific parameters as well as to denote the number of individuals $Z(t)$ at the particular infection state $Z$ at time $t$. The flow chart of the model is given in \cref{fig:IDESECIR}. 

We further define the number of people who enter a specific compartment $z_2\in\mathcal{Z}$ from $z_1\in\mathcal{Z}$ at time $x$ by $\sigma_{z_1}^{z_2}(x)$. These \textit{transitions} or \textit{flows} are only defined for pairs of consecutive compartments, as shown in~\cref{fig:IDESECIR}. Based on the transitions $\sigma_{z_1}^{z_2}(x)$, we can define our model equations for the compartments as follows
\begingroup
\allowdisplaybreaks[4]
\begin{align}\label{eq:IDESECIR}
\begin{aligned}
    S'(t) &= -\frac{S(t)}{N-D(t)}\, \phi(t)\,  \int_{-\infty}^t \AW{\xi_C(t-x) \, \rho_C(t-x)}\left(\mu_C^I\,\gamma_C^I(t-x)+\left(1-\mu_C^I\right)\gamma_C^R(t-x)\right) \sigma_E^C(x) \\
        & \hspace{3.05cm}  + \AW{\xi_I(t-x)\,\rho_I(t-x)} \left(\mu_I^H\,\gamma_I^H(t-x)+\left(1-\mu_I^H\right)\gamma_I^R(t-x)\right) \sigma_C^I(x) \ \d x ,\\
    E(t) &=  \int_{-\infty}^t \gamma_E^C(t-x)\, \sigma_S^E(x) \ \d x ,\\ 
    C(t) &= \int_{-\infty}^t \left(\mu_C^I\,\gamma_C^I(t-x)+\left(1-\mu_C^I\right)\gamma_C^R(t-x)\right) \sigma_E^C(x) \ \d x ,\\ 
    I(t) &= \int_{-\infty}^t \left(\mu_I^H\,\gamma_I^H(t-x)+\left(1-\mu_I^H\right)\gamma_I^R(t-x)\right) \sigma_C^I(x)\ \d x ,\\ 
    H(t) &= \int_{-\infty}^t \left(\mu_H^U\, \gamma_H^U(t-x) + \left(1-\mu_H^U\right) \gamma_H^R(t-x)\right) \sigma_I^H(x) \ \d x ,\\ 
    U(t) &= \int_{-\infty}^t  \left(\mu_U^D\, \gamma_U^D(t-x) + \left(1- \mu_U^D\right)\gamma_U^R(t-x)\right) \sigma_H^U(x) \ \d x ,\\ 
    R(t) &= \int_{-\infty}^t  \sigma_C^R(x) + \sigma_I^R(x)+ \sigma_H^R(x)+ \sigma_U^R(x)\ \d x ,\\
    D(t) &= \int_{-\infty}^t \sigma_U^D(x) \ \d x.
\end{aligned}
\end{align}
\endgroup
The total population (including deaths) is denoted by $N:=\sum_{Z\in\mathcal{Z}} Z(t)$ and is supposed to be constant over time. Additionally, $\phi(t)\geq0$ refers to the (bounded) average daily contacts of a person at time $t$ and $\AW{\rho_C(\tau)}\in[0,1]$ \AW{and $\rho_I(\tau)\in[0,1]$ are} the average transmission probabilit\AW{ies  
at infection age $\tau$ of individuals in compartments $C$ and $I$, respectively}. The parameters $\xi_C(\tau)\in[0,1]$ and $\xi_I( \tau)\in[0,1]$ denote the mean proportion of individuals in compartments $C$ and $I$, respectively, that are not isolated if they have been in the respective compartments for time $\tau$. The parameters $\mu_{z_1}^{z_2}\in[0,1]$ are defined as the expected proportion\AW{s} of people which move from compartment $z_1\in\mathcal{Z}$ to $z_2\in\mathcal{Z}$ in the course of their disease. The expression $\gamma_{z_1}^{z_2}(\tau)$ with $\gamma_{z_1}^{z_2}: \mathbb{R}\to[0,1]$ denotes the expected proportion of individuals who are still in compartment $z_1$ on $\tau$ days after entering this compartment and who will eventually move to compartment $z_2$ in the course of the disease. For theoretical purposes,\AW{we assume that these functions have the following properties:
\begin{itemize}
    \item The functions $\gamma_{z_1}^{z_2}$ are continuously differentiable on $(0,\infty)$.
    \item They are monotonically decreasing with
    $\gamma_{z_1}^{z_2}(\tau)=1$ for $\tau\leq0$; in particular, it holds that $\gamma_{z_1}^{z_2}(0)=1$.
    \item We assume that the expected stay time in compartment $z_1$ is finite, i.e. that $\int_0^\infty \gamma_{z_1}^{z_2}(\tau)\,\d\tau < \infty$.
\end{itemize}
}
\AW{These assumptions imply that $\lim_{\tau\rightarrow\infty}\gamma_{z_1}^{z_2}(\tau)=0$, i.e., that individuals will eventually leave compartment $z_1$.}
The definition of $\gamma_{z_1}^{z_2}$ together with the above properties implies that there exists a cumulative distribution function (CDF) $F_{z_1}^{z_2}(\tau):=1-\gamma_{z_1}^{z_2}(\tau)$ for $\tau\in\mathbb{R}$ which describes the distribution of the stay time. Consequently, $-{\gamma_{z_1}^{z_2}}':\mathbb{R} \to\mathbb{R}$ is a probability density function (PDF) that is continuous on $(0, \infty)$ and satisfies $-{\gamma_{z_1}^{z_2}}'(\tau)=0$ for $\tau <0$. \AW{Note that since $\gamma_{z_1}^{z_2}$ is monotonically decreasing, it holds that $-{\gamma_{z_1}^{z_2}}'$ is nonnegative, i.e., $-{\gamma_{z_1}^{z_2}}'(\tau)\geq 0$ for all $\tau\in\mathbb{R}$.}

The meaning of the parameters is summarized in~\cref{tab:parameters}. In order to fully describe the model, it is necessary to provide formulas for the transitions $\sigma_{z_1}^{z_2}$ which we will derive in the following.
\begin{table}
    \centering
    \begin{tabular}{c l}
     \hline
     Parameter &  Description\\
     \hline
     $N$ & Total population size.\\
      $\phi(t)$ & Daily contacts at simulation time $t$.\\
     \AW{$\rho_C(\tau)$} & \AW{Transmission risk of Carrier individuals on contact at infection age $\tau$.}\\
     \AW{$\rho_I(\tau)$} & \AW{Transmission risk of Infected individuals on contact at infection age $\tau$.} \\
     \AW{$\xi_{C}(\tau)$} & Proportion of Carrier individuals not isolated \AW{at} infection age $\tau$.\\
     \AW{$\xi_{I}(\tau)$} & Proportion of Infected individuals not isolated \AW{at} infection age $\tau$.\\
     $\mu_{z_1}^{z_2}$& Expected probability of transition from compartment $z_1$ to $z_2$.\\
     $\gamma_{z_1}^{z_2}(\tau)$ & Expected proportion of individuals who will be in compartment $z_1$ \\
     & on $\tau$ days after entering $z_1$ and who will eventually move to compartment $z_2$.
\end{tabular}
\caption{\textbf{Description of the parameters used to define the IDE model.}}\label{tab:parameters}
\end{table}

We begin by observing that the following relations between the derivatives of the compartments and the flows should hold true. This is the case because the change of the compartment sizes is determined by the in- and outflow, which is given by the respective transitions. This leads to 
\begin{align}
\begin{aligned}\label{eq:IDEAblKomp}
    S'(t)&=-\sigma_{S}^{E}(t) ,\\
    E'(t)&=\sigma_{S}^{E}(t)-\sigma_{E}^{C}(t),\\
    C'(t)&=\sigma_{E}^{C}(t)-\sigma_{C}^{I}(t)-\sigma_{C}^{R}(t),\\
    I'(t)&=\sigma_{C}^{I}(t)- \sigma_{I}^{H}(t)-\sigma_{I}^{R}(t),\\
    H'(t)&= \sigma_{I}^{H}(t)-\sigma_{H}^{U}(t)-\sigma_{H}^{R}(t),\\
    U'(t)&=\sigma_{H}^{U}(t)-\sigma_{U}^{D}(t)-\sigma_{U}^{R}(t),\\
    R'(t)&=\sigma_{C}^{R}(t)+\sigma_{I}^{R}(t)+\sigma_{H}^{R}(t)+\sigma_{U}^{R}(t),\\
    D'(t)&=\sigma_{U}^{D}(t).
\end{aligned}
\end{align}
The derivative of one compartment is calculated by adding the incoming flows and subtracting the outgoing flows. Therefore, we can derive the formulas for the transitions by taking the derivative of the formulas for the compartments, as introduced in~\eqref{eq:IDESECIR}.

To demonstrate the relation between~\eqref{eq:IDESECIR} and~\eqref{eq:IDEAblKomp} and derive a formula for the corresponding flows, we only consider compartment $C$. The results for the remaining compartments and flows can be obtained by analogous computations.
A derivation of the equation for $C$ from~\eqref{eq:IDESECIR}, employing the Leibniz integral rule, yields the formula
\begin{align*}
\begin{aligned}
      C'(t) &= \frac{\mathrm{d}}{\mathrm{d}t}\left(\int_{-\infty}^t \left(\mu_C^I\,\gamma_C^I(t-x)+\left(1-\mu_C^I\right)\gamma_C^R(t-x)\right) \sigma_E^C(x) \ \d x\right)\\
     &=\mu_C^I\,\gamma_C^I(0)\,\sigma_E^C(t)  + \AW{\mu_C^I}\int_{-\infty}^t {\gamma_C^I}'(t-x)\,\sigma_E^C(x) \ \d x  \\
     &\hspace{0.2cm}+\left(1-\mu_C^I\right)\gamma_C^R(0)\, \sigma_E^C(t) + \AW{\left(1-\mu_C^I\right)}\int_{-\infty}^t {\gamma_C^R}'(t-x)\, \sigma_E^C(x) \ \d x.
\end{aligned}
\end{align*} Using the assumption $\gamma_C^R(0)=\gamma_C^I(0)=1$, we obtain 
\begin{align*}
      C'(t) 
     &=\sigma_E^C(t) + \AW{\mu_C^I}\int_{-\infty}^t {\gamma_C^I}'(t-x)\,\sigma_E^C(x) \ \d x+\AW{\left(1-\mu_C^I\right)}\int_{-\infty}^t{\gamma_C^R}'(t-x)\, \sigma_E^C(x) \ \d x.
\end{align*}
A comparison of the result with the proposed formula for $C'$ in~\eqref{eq:IDEAblKomp},
\begin{align*}
     C'(t) &=\sigma_E^C(t)-\sigma_C^{I}(t)-\sigma_C^R(t),
\end{align*} along with the incorporation of the definitions of the parameters and the transitions, allows for the assignment
\begin{align*}
     C'(t) 
     &=\sigma_E^C(t) \,+\, \underbrace{\AW{\mu_C^I}\int_{-\infty}^t \,{\gamma_C^I}'(t-x)\,\sigma_E^C(x) \ \d x}_{=:-\sigma_C^{I}(t)}\,+\,\underbrace{\AW{\left(1-\mu_C^I\right)}\int_{-\infty}^t{\gamma_C^R}'(t-x)\, \sigma_E^C(x) \ \d x}_{=:-\sigma_C^R(t)}.
\end{align*}
The application of this methodology to the remaining compartments yields
\begin{align}
\begin{aligned}\label{eq:flows}
    \sigma_{S}^{E}(t)=&-S'(t)=
        S(t)\,\AW{\lambda}(t) ,
    \quad && &\sigma_{E}^{C}(t)=&
        - \int_{-\infty}^{t} {\gamma_{E}^{C}}'(t-x)\,\sigma_{S}^{E}(x)\, \d x,\\
    \sigma_{C}^{I}(t)=&
        -\AW{\mu_{C}^{I}\,}\int_{-\infty}^{t} {\gamma_{C}^{I}}'(t-x)\,\sigma_{E}^{C}(x)\, \d x ,
    \quad && &\sigma_{C}^{R}(t)=&
        -\AW{\left(1-\mu_{C}^{I}\right)}\,\int_{-\infty}^{t}{\gamma_{C}^{R}}'(t-x)\,\sigma_{E}^{C}(x)\, \d x ,\\
    \sigma_{I}^{H}(t)=&
        -\AW{\mu_{I}^{H}\,}\int_{-\infty}^{t} {\gamma_{I}^{H}}'(t-x)\,\sigma_{C}^{I}(x)\,\d x ,
    \quad && &\sigma_{I}^{R}(t)=&
        -\AW{\left(1-\mu_{I}^{H}\right)}\int_{-\infty}^{t} {\gamma_{I}^{R}}'(t-x)\,\sigma_{C}^{I}(x)\, \d x ,\\
    \sigma_{H}^{U}(t)=&
        -\AW{\mu_{H}^{U}\,}\int_{-\infty}^{t} {\gamma_{H}^{U}}'(t-x)\,\sigma_{I}^{H}(x)\, \d x ,
    \quad && &\sigma_{H}^{R}(t)=&
        -\AW{\left(1-\mu_{H}^{U}\right)}\int_{-\infty}^{t} {\gamma_{H}^{R}}'(t-x)\,\sigma_{I}^{H}(x)\, \d x ,\\
    \sigma_{U}^{D}(t)=&
        -\AW{\mu_{U}^{D}\,}\int_{-\infty}^{t} {\gamma_{U}^{D}}'(t-x)\,\sigma_{H}^{U}(x)\, \d x ,
    \quad && &\sigma_{U}^{R}(t)=&
        -\AW{\left(1-\mu_{U}^{D}\right)}\int_{-\infty}^{t} {\gamma_{U}^{R}}'(t-x)\,\sigma_{H}^{U}(x)\, \d x,
        \end{aligned}
\end{align}
with a force of infection defined as 
\begin{align} \label{eq:forceofinfection}
\begin{aligned}
	\AW{\lambda}(t) &= \frac{\phi(t)}{N-D(t)} \, \int_{-\infty}^t \xi_C(t-x)\AW{\,\rho_C(t-x)} \left(\mu_C^I\,\gamma_C^I(t-x)+\left(1-\mu_C^I\right)\gamma_C^R(t-x)\right) \sigma_E^C(x)\\
     & \hspace{2.2cm}  + \xi_I(t-x)\AW{\,\rho_I(t-x)} \left(\mu_I^H\,\gamma_I^H(t-x)+\left(1-\mu_I^H\right)\gamma_I^R(t-x)\right) \sigma_C^I(x) \ \d x. \\
\end{aligned}
\end{align}

\begin{remark}
In~\ref{app:ide_ode}, we show by elementary math operations that the here introduced model is in fact a generalization of the ODE-SECIR-type model presented in~\cite{kuhn_assessment_2021}. The IDE model is formulated for general transition distributions, thus allowing for nonexponential stay times. 
\end{remark}

\section{A nonstandard discretization scheme} \label{sec:discretization}
In the previous section, we have introduced a generalization of a model for infectious disease dynamics from~\cite{kuhn_assessment_2021}. However, the transition to an integro-differential system means that standard numerical ODE solvers cannot be used. In a recent publication~\cite{messina_non-standard_2022}, the authors showed that the application of a trapezoidal rule to discretize the integrals in the model equations does not conserve important (biological) properties of the continuous system \AW{for discretization step sizes above a certain threshold}. The violated properties include the monotonicity of $S$ and the positivity of the force of infection term $\AW{\lambda}$. To address this issue, the authors proposed a nonstandard scheme to solve the $S$ equation of an SIR model based on integro-differential equations. In the following, we extend this formula to our model compartments and flows as defined in the previous section. In~\cref{sec:theory}, we will furthermore prove that the extended discretization scheme still conserves essential (biological) properties as desired.

For the discrete approximations to the compartments as defined in~\eqref{eq:IDESECIR} (or~\eqref{eq:IDEAblKomp}), the flows as defined in~\eqref{eq:flows} and the force of infection term as defined in~\eqref{eq:forceofinfection}, we use the $\widehat{x}$ notation for the respective discretized version of variable $x$. For a step size $\Delta t>0$, let $t_n := n\,\Delta t$ for $n \in \mathbb{Z}$ define a uniform mesh. We define $a \in \mathbb{Z}_{-}$ such that $t_a \leq \widehat{T}$, where $\widehat{T} \in \mathbb{R}_-$ is a time point at which no infections have occurred yet. Such a time point $\widehat{T}$ is assumed to be known.

For $n \in \mathbb{N}_0$, we define
\begin{align}\label{eq:sigmaSE_discrete}
    \widehat{\sigma}_S^E(t_{n+1})&=-\widehat{S}{'}(t_{n+1})=\widehat{S}(t_{n+1})\,\widehat{\AW{\lambda}}(t_{n}),
\end{align}
which is a right endpoint approximation in $\widehat{S}$ and a left endpoint approximation in $\widehat{\AW{\lambda}}$. This gives us a nonstandard numerical scheme, as proposed in~\cite{messina_non-standard_2022}. Together with a backwards finite difference scheme for $\widehat{S}{'}$,
\begin{align} \label{eq:S_backwardsdifference}
    \widehat{S}{'}(t_{n+1}) = \frac{\widehat{S}(t_{n+1}) - \widehat{S}(t_n)}{\Delta t},
\end{align}
we obtain
\begin{align}
    \widehat{S}(t_{n+1}) &= \frac{\widehat{S}(t_n)}{1+\Delta t\, \widehat{\AW{\lambda}}(t_n)}.\label{eq:diskretisierung_S}
\end{align}
The force of infection term is discretized as
\begin{align} 
 \begin{aligned}\label{eq:diskretisierung_phi}
    \widehat{\AW{\lambda}}(t_{n+1})&
    = 
    \frac{\phi(t_{n+1})}{N-\widehat{D}(t_{n})} \,  \,  \Delta t\sum_{i=a}^{n}
 \biggl( \xi_C(t_{n+1-i})\AW{\,\rho_C(t_{n+1-i})} \,\Big(\mu_C^I \, \gamma_C^I (t_{n+1-i}) + (1- \mu_C^I) \,\gamma_C^R(t_{n+1-i})\Big) \, \widehat{\sigma}_E^C(t_{i+1})
 \\
	&\hspace{2.65cm}
 +
	\xi_I(t_{n+1-i})\,\AW{\rho_I(t_{n+1-i})\,}\Big( \mu_I^H \, \gamma_I^H(t_{n+1-i}) + (1- \mu_I^H) \, \gamma_I^R(t_{n+1-i})\Big) \, \widehat{\sigma}_C^I(t_{i+1}) 
	\biggr),
 \end{aligned}
\end{align}
which is again a nonstandard discretization by using a rectangular rule with a right endpoint approximation in $\widehat{\sigma}_E^C$ and $\widehat{\sigma}_C^I$ and a left endpoint approximation in $\xi_C$, $\xi_I$\AW{, $\rho_C$, $\rho_I$} and ${\gamma_{z_1}^{z_2}}'$ for appropriate $z_1$ and $z_2$.
The integrals in the formulas for the remaining flows will be approximated analogously by employing a nonstandard rectangular rule. We furthermore replace the derivative ${\gamma_{z_1}^{z_2}}'$ by a numerical approximation ${{\widehat\gamma}{}_{z_1}^{z_2}}'$, yet to be defined. Additionally, using~\eqref{eq:diskretisierung_S}, this results in the discretized system of flows,

\begingroup
\allowdisplaybreaks[4]
\begin{align} 
\begin{aligned} \label{eq:diskretisierung_sigma}\widehat{\sigma}_S^E(t_{n+1})&=\frac{\widehat{S}(t_n)}{1+\Delta t\, \widehat{\AW{\lambda}}(t_n)}\,\widehat{\AW{\lambda}}(t_{n})  , \quad &&
     &\widehat{\sigma}_E^{C} (t_{n+1}) &= - \Delta t \,\sum_{i=a}^n {\widehat{\gamma}{}_{E}^{C}}'(t_{n+1-i})\,\widehat{\sigma}_S^E(t_{i+1}) , \\
     \widehat{\sigma}_C^{I} (t_{i+1}) &= - \,\AW{\mu_C^I\,}\Delta t\, \sum_{i=a}^n  \, {\widehat{\gamma}{}_C^{I}}'(t_{n+1-i})\,\widehat{\sigma}_E^{C} (t_{i+1}) , \quad &&
     &\
     \widehat{\sigma}_C^R(t_{n+1}) &= -\AW{\, \left(1-\mu_C^I\right)\,}\Delta t \sum_{i=a}^{n}  \,{\widehat{\gamma}{}_C^R}'(t_{n+1-i}) \,\widehat{\sigma}_E^C(t_{i+1})  , \\
     \widehat{\sigma}_I^H(t_{n+1}) &= -\AW{\,\mu_I^H\,}\Delta t \sum_{i=a}^{n} \,{\widehat{\gamma}{}_I^H}'(t_{n+1-i}) \,\widehat{\sigma}_C^I(t_{i+1}) ,\quad &&
     &\widehat{\sigma}_I^R(t_{n+1}) &= - \AW{\,  \left(1-\mu_I^H\right)\,}\Delta t \sum_{i=a}^{n}\,{\widehat{\gamma}{}_I^R}'(t_{n+1-i}) \,\widehat{\sigma}_C^I(t_{i+1}),\\
    \widehat{\sigma}_H^U(t_{n+1}) &=- \AW{\, \mu_H^U\,}\Delta t \sum_{i=a}^{n}\, {\widehat{\gamma}{}_H^U}'(t_{n+1-i}) \,\widehat{\sigma}_I^H(t_{i+1}), \quad &&
     &\widehat{\sigma}_H^R(t_{n+1}) &=- \AW{\, \left(1-\mu_H^U\right)\,}\Delta t \sum_{i=a}^{n}\, {\widehat{\gamma}{}_H^R}'(t_{n+1-i}) \,\widehat{\sigma}_I^H(t_{i+1}), \\
    \widehat{\sigma}_U^D(t_{n+1})  &= -  \AW{\, \mu_U^D \,}\Delta t\sum_{i=a}^{n}  \, {\widehat{\gamma}{}_U^D}'(t_{n+1-i}) \,\widehat{\sigma}_H^U(t_{i+1}), \quad &&
     &\widehat{\sigma}_U^R(t_{n+1})  &= - \AW{\, \left(1-\mu_U^D\right) \,}\Delta t \sum_{i=a}^{n} \,{\widehat{\gamma}{}_U^R}'(t_{n+1-i}) \, \widehat{\sigma}_H^U(t_{i+1}).
\end{aligned}
\end{align}
\endgroup
We observe that the incoming flow of the current time step is required in order to compute the subsequent flows. Therefore, it is essential to calculate the flows in the correct sequence. 

Once the flows for all time points have been computed, the values for the compartments can be obtained. In the next sections, two approaches to the discretization of the compartments will be introduced. The second scheme has the advantage that it is much more computationally efficient throughout the simulation and in~\cref{thm:discretizations_equivalent}, we show that these discretizations are equivalent if we approximate the derivatives of the functions $\gamma_{z_1}^{z_2}$ using a backwards difference scheme. 

\subsection{Nonstandard discretization of compartments based on integral formulation} \label{subsec:sum_discretization}

In this section, we will introduce a discretization for the compartments apart from $S$ by directly applying the nonstandard rectangular rules to the integral model formulation as given in~\eqref{eq:IDESECIR}. Since this scheme directly discretizes the integral formulation, we will denote it as \textit{sum discretization}.

We distinguish two cases. The first category includes transient or intermediate compartments that individuals are entering and leaving, i.e., $E$, $C$, $I$, $H$, and $U$. The second category comprises absorbing or dead-end compartments, as $R$ and $D$, where individuals enter but cannot exit.

In the first case, we approximate the integral term by once again employing the nonstandard numerical rectangular rule that we previously used in equation~\eqref{eq:diskretisierung_phi}. Here, we use a right approximation for the flows and a left approximation for $\gamma_{z_1}^{z_2}$. In the second case, we use a standard right rectangular rule, where we use a right approximation of the respective flows. With this, we obtain the discrete approximations for all remaining compartments by
\begingroup
\allowdisplaybreaks[4]
\begin{align}
\begin{aligned}\label{eq:diskret_sum}
    \widehat{E}(t_{n+1}) &= 
    \Delta t \sum_{i=a}^{n} \gamma_E^C(t_{n+1-i})\, \widehat{\sigma}_S^E(t_{i+1}) ,\\
    \widehat{C}(t_{n+1}) &= 
    \Delta t \sum_{i=a}^{n} \left(\mu_C^I\,\gamma_C^I(t_{n+1-i})+\left(1-\mu_C^I\right)\gamma_C^R(t_{n+1-i})\right) \widehat{\sigma}_E^C(t_{i+1}),\\
    \widehat{I}(t_{n+1}) &=
    \Delta t \sum_{i=a}^{n} \left(\mu_I^H\,\gamma_I^H(t_{n+1-i})+\left(1-\mu_I^H\right)\gamma_I^R(t_{n+1-i})\right) \widehat{\sigma}_C^I(t_{i+1}),\\
    \widehat{H}(t_{n+1}) &= \Delta t \sum_{i=a}^{n}\left(\mu_H^U \,\gamma_H^U(t_{n+1-i}) + \left(1-\mu_H^U\right) \gamma_H^R(t_{n+1-i})\right) \widehat{\sigma}_I^H(t_{i+1}),\\
    \widehat{U}(t_{n+1}) &= \Delta t \sum_{i=a}^{n} \left(\mu_U^D\, \gamma_U^D(t_{n+1-i}) + \left(1- \mu_U^D\right)\gamma_U^R(t_{n+1-i})\right) \widehat{\sigma}_H^U(t_{i+1}),\\
    \widehat{R}(t_{n+1}) &= \Delta t \sum_{i=a}^{n} \left(\widehat{\sigma}_C^R(t_{i+1}) + \widehat{\sigma}_I^R(t_{i+1}) + \widehat{\sigma}_H^R(t_{i+1}) + \widehat{\sigma}_U^R(t_{i+1})\right),\\
    \widehat{D}(t_{n+1}) &= \Delta t \sum_{i=a}^{n}\widehat{\sigma}_U^D(t_{i+1}).
    \end{aligned}
\end{align}
\endgroup

\subsection{Nonstandard discretization of compartments based on flows}\label{subsec:update_discretization}
Another way to discretize the formulas for the compartments apart from $S$ is based on the formulas for the derivatives of the compartments in~\eqref{eq:IDEAblKomp}. 
The evaluation of these equations at a discrete time point $t_{n+1}$, together with the approximation of the derivative of the compartments using a backwards difference scheme and discretizations~\eqref{eq:diskretisierung_sigma}, leads to the following discrete formulas 
\begingroup
\allowdisplaybreaks[4]
\begin{align}
\begin{aligned}
\label{eq:diskret_update}
    \widehat{E}(t_{n+1})&=  \widehat{E}(t_n) +\, \Delta t \, \widehat{\sigma}_{S}^{E}(t_{n+1})
                        -\, \Delta t \, \widehat{\sigma}_{E}^{C}(t_{n+1}),\\
    \widehat{C}(t_{n+1})&=  \widehat{C}(t_n) +\, \Delta t \, \widehat{\sigma}_{E}^{C}(t_{n+1})
                        -\, \Delta t \, \widehat{\sigma}_{C}^{I}(t_{n+1})
                        -\,\Delta t \, \widehat{\sigma}_{C}^{R}(t_{n+1}),\\
    \widehat{I}(t_{n+1})&=  \widehat{I}(t_n) +\, \Delta t \, \widehat{\sigma}_{C}^{I}(t_{n+1})
                        -\, \Delta t \, \widehat{\sigma}_{I}^{H}(t_{n+1})
                        -\, \Delta t \, \widehat{\sigma}_{I}^{R}(t_{n+1}),\\
    \widehat{H}(t_{n+1})&=\widehat{H}(t_n) +\, \Delta t \, \widehat{\sigma}_{I}^{H}(t_{n+1})
                        -\,\Delta t \, \widehat{\sigma}_{H}^{U}(t_{n+1})
                        -\, \Delta t \, \widehat{\sigma}_{H}^{R}(t_{n+1}),\\
    \widehat{U}(t_{n+1})&= \widehat{U}(t_n) +\, \Delta t \, \widehat{\sigma}_{H}^{U}(t_{n+1})
                        -\, \Delta t \, \widehat{\sigma}_{U}^{D}(t_{n+1})
                        -\, \Delta t \, \widehat{\sigma}_{U}^{R}(t_{n+1}),\\
    \widehat{R}(t_{n+1})&= \widehat{R}(t_n) +\, \Delta t \, \widehat{\sigma}_{C}^{R}(t_{n+1}) 
                        +\, \Delta t \, \widehat{\sigma}_{I}^{R}(t_{n+1}) 
                        +\, \Delta t \, \widehat{\sigma}_{H}^{R}(t_{n+1}) 
                        +\, \Delta t \, \widehat{\sigma}_{U}^{R}(t_{n+1}),\\
    \widehat{D}(t_{n+1})&= \widehat{D}(t_n) +\, \Delta t \, \widehat{\sigma}_{U}^{D}(t_{n+1}).
    \end{aligned}
\end{align}
\endgroup
It is assumed that the values of the compartments of the previous time step $t_{n}$ are known. With this knowledge, we can compute the compartments of the current time step $t_{n+1}$ by updating the previous compartment values with the flows of the current time step. This discretization scheme will be denoted \textit{update discretization} as it incrementally updates the compartment values in the simulation.

\subsection{Connection between sum and update discretizations}
In this section and the following theorem, we demonstrate that the sum discretization as outlined in~\cref{subsec:sum_discretization} and the update discretization as described in~\cref{subsec:update_discretization} are equivalent when approximating the derivatives of the functions $\gamma_{z_1}^{z_2}$ using a backwards difference scheme.

\begin{theorem}\label{thm:discretizations_equivalent}
    Let the derivative of $\gamma_{z_1}^{z_2}$ for appropriate combinations of $z_1,z_2\in\mathcal{Z}$ be approximated with a backwards difference scheme, i.e., let
    \begin{align}\label{eq:gamma_backwards_difference}
        {\widehat{\gamma}{}_{z_1}^{z_2}}'(t_{i+1}):=\frac{ \gamma_{z_1}^{z_2}(t_{i+1})- \gamma_{z_1}^{z_2}(t_{i})}{\Delta t}
    \end{align}
    for $i\in\mathbb{Z}$. Then the sum discretization of the compartments, as defined in~\eqref{eq:diskret_sum}, is equivalent to the update discretization, as described in~\eqref{eq:diskret_update}.
\end{theorem}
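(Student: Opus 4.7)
My plan is to prove the equivalence by reducing both formulations to a single telescoping identity: for each compartment $Z\in\{E,C,I,H,U,R,D\}$ I would show that the increment $\widehat{Z}(t_{n+1}) - \widehat{Z}(t_n)$ computed from the sum discretization~\eqref{eq:diskret_sum} coincides with the net flow expression prescribed by the update discretization~\eqref{eq:diskret_update}. Since both schemes share the same discrete flows $\widehat{\sigma}_{z_1}^{z_2}$ built by~\eqref{eq:diskretisierung_sigma}, and since~\eqref{eq:diskret_sum} pins down $\widehat{Z}(t_0)$ in terms of pre-simulation data (which we take as the initial data of the update scheme), it suffices to verify this increment identity for each $n\geq 0$; the theorem then follows by a direct induction on $n$.

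The mechanism is uniform across the transient compartments $E,C,I,H,U$, so I would carry it out in detail for one representative case, say $C$, and indicate the obvious modifications elsewhere. Setting $g(\tau) := \mu_C^I\,\gamma_C^I(\tau) + (1-\mu_C^I)\gamma_C^R(\tau)$, I would split off the new top index $i=n$ from the sum defining $\widehat{C}(t_{n+1})$, shift the summation index in $\widehat{C}(t_n)$, and obtain
\begin{align*}
\widehat{C}(t_{n+1}) - \widehat{C}(t_n) = \Delta t\, g(t_1)\, \widehat{\sigma}_E^C(t_{n+1}) + \Delta t \sum_{i=a}^{n-1} \bigl[g(t_{n+1-i}) - g(t_{n-i})\bigr]\, \widehat{\sigma}_E^C(t_{i+1}).
\end{align*}
On the other side, the definitions~\eqref{eq:diskretisierung_sigma} give $\widehat{\sigma}_C^I(t_{n+1}) + \widehat{\sigma}_C^R(t_{n+1}) = -\Delta t \sum_{i=a}^n \widehat{g}'(t_{n+1-i})\,\widehat{\sigma}_E^C(t_{i+1})$, where $\widehat{g}' := \mu_C^I\,{\widehat{\gamma}{}_C^I}{}' + (1-\mu_C^I)\,{\widehat{\gamma}{}_C^R}{}'$. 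Applying the backwards-difference assumption~\eqref{eq:gamma_backwards_difference} rewrites $\Delta t\,\widehat{g}'(t_{n+1-i})$ as $g(t_{n+1-i}) - g(t_{n-i})$; the $i=n$ term of that sum equals $g(t_1) - g(t_0) = g(t_1) - 1$ by the normalization $\gamma_C^I(0)=\gamma_C^R(0)=1$. Adding back the incoming-flow contribution $\Delta t\,\widehat{\sigma}_E^C(t_{n+1})$ from~\eqref{eq:diskret_update}, the $-1$ is absorbed and one recovers precisely the right-hand side above, as required.

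For the absorbing compartments $R$ and $D$ the verification is even shorter, since their sum formulas carry no $\gamma$ weight and the telescoping difference immediately isolates the $i=n$ term, which is exactly the incoming flow contribution of~\eqref{eq:diskret_update}. The analogous computation for $E$ (single convolution kernel $\gamma_E^C$) and for $I,H,U$ (two-kernel convex combinations exactly like the $C$ case) runs through without change. The main obstacle, in my view, is purely notational: one must carefully track the interplay between the top-index boundary term, the index shift that produces the backwards-difference combination $g(t_{n+1-i}) - g(t_{n-i})$, and the normalization $\gamma_{z_1}^{z_2}(0)=1$ that rescues the boundary contribution. Once this bookkeeping is fixed uniformly, the identification is straightforward for all seven compartments and the theorem is immediate.
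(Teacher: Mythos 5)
Your proposal is correct and follows essentially the same route as the paper's proof: a telescoping computation of $\widehat{Z}(t_{n+1})-\widehat{Z}(t_n)$ from the sum discretization, identification of the resulting differences $\gamma_{z_1}^{z_2}(t_{n+1-i})-\gamma_{z_1}^{z_2}(t_{n-i})$ with $\Delta t\,{\widehat{\gamma}{}_{z_1}^{z_2}}'(t_{n+1-i})$ via the backwards-difference assumption, and use of the normalization $\gamma_{z_1}^{z_2}(0)=1$ to recover the incoming-flow term, carried out in detail for $C$ and extended by analogy to the other compartments. The only cosmetic difference is that you extract the boundary term $g(t_1)$ from the top of the first sum while the paper adds and subtracts the $g(t_0)=1$ term of the second sum; the two bookkeepings are algebraically identical.
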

\begin{proof}
    We begin to show the statement for the compartment $C$. The sum discretization of $C$ as stated in~\eqref{eq:diskret_sum} is given by 
    \begin{align*}
      \widehat{C}(t_{n+1}) =\Delta t\,\sum_{i=a}^{n}\left(\mu_{C}^{I}\,\gamma_{C}^{I}(t_{n+1-i})+\left(1-\mu_{C}^{I}\right)\,\gamma_{C}^{R}(t_{n+1-i})\right)\,\widehat{\sigma}_{E}^{C}(t_{i+1}).
    \end{align*} Calculating the difference of two consecutive time steps using this formula yields
    \begin{align*}
         \widehat{C}(t_{n+1}) - \widehat{C}(t_{n})
         &=\Delta t\,\sum_{i=a}^{n}\left(\mu_{C}^{I}\,\gamma_{C}^{I}(t_{n+1-i})+\left(1-\mu_{C}^{I}\right)\,\gamma_{C}^{R}(t_{n+1-i})\right)\widehat{\sigma}_{E}^{C}(t_{i+1})\\
        &\hspace{0.35cm}-\Delta t\,\sum_{i=a}^{n-1}\left(\mu_{C}^{I}\,\gamma_{C}^{I}(t_{n-i})+\left(1-\mu_{C}^{I}\right)\,\gamma_{C}^{R}(t_{n-i})\right)\widehat{\sigma}_{E}^{C}(t_{i+1})\\
        &= \Delta t\,\left(\mu_{C}^{I} \, \gamma_{C}^{I}(t_0) +\left(1-\mu_{C}^{I}\right)\,\gamma_{C}^{R}(t_0)\right)\,\widehat{\sigma}_{E}^{C}(t_{n+1})\\
         &\hspace{1cm} + \Delta t\,\sum_{i=a}^{n}\mu_{C}^{I}\left(\gamma_{C}^{I}(t_{n+1-i})-\gamma_{C}^{I}(t_{n-i})\right)\,\widehat{\sigma}_{E}^{C}(t_{i+1}) \\
        &\hspace{1cm}+\Delta t\,\sum_{i=a}^{n}\left(1-\mu_{C}^{I}\right) \left(\gamma_{C}^{R}(t_{n+1-i})-\gamma_{C}^{R}(t_{n-i})\right)\,\widehat{\sigma}_{E}^{C}(t_{i+1}).
    \end{align*} 
    By using $\gamma_{C}^{I}(t_0)=\gamma_{C}^{I}(0)=1$ and $\gamma_{C}^{R}(t_0)=\gamma_{C}^{R}(0)=1$, we obtain
    \begin{align}
    \begin{aligned}
    \label{eq:diff_C_sum}
         \widehat{C}(t_{n+1})- \widehat{C}(t_{n})&=\Delta t\,\widehat{\sigma}_{E}^{C}(t_{n+1}) + \Delta t\,\sum_{i=a}^{n}\mu_{C}^{I}\,\left(\gamma_{C}^{I}(t_{n+1-i})-\gamma_{C}^{I}(t_{n-i})\right)\,\widehat{\sigma}_{E}^{C}(t_{i+1})\\
        &\hspace{2.cm}+\Delta t\,\sum_{i=a}^{n}\left(1-\mu_{C}^{I}\right)\,\left(\gamma_{C}^{R}(t_{n+1-i})-\gamma_{C}^{R}(t_{n-i})\right)\,\widehat{\sigma}_{E}^{C}(t_{i+1}).
        \end{aligned}
    \end{align}
    Using the formulas~\eqref{eq:diskretisierung_sigma} for the relevant transitions and applying a backwards difference scheme as in~\eqref{eq:gamma_backwards_difference}, we obtain
     \begin{align*}
        \widehat{\sigma}_{C}^{I}(t_{n+1})
        &=-\,\AW{\mu_{C}^{I}}\sum_{i=a}^{n}\, \left(\gamma_{C}^{I}(t_{n+1-i})-\gamma_{C}^{I}(t_{n-i})\right)  \widehat{\sigma}_{E}^{C}(t_{i+1})\\
        \intertext{and}
        \widehat{\sigma}_{C}^{R}(t_{n+1})
        &=-\,\AW{\left(1-\mu_{C}^{I}\right)}\sum_{i=a}^{n} \,\left(\gamma_{C}^{R}(t_{n+1-i})-\gamma_{C}^{R}(t_{n-i})\right) \widehat{\sigma}_{E}^{C}(t_{i+1}).   
    \end{align*}
    We insert these formulas into~\eqref{eq:diff_C_sum} and get
    \begin{align*}
          \widehat{C}(t_{n+1})- \widehat{C}(t_{n})&=  \Delta t\, \widehat{\sigma}_{E}^{C}(t_{n+1}) -\Delta t\, \widehat{\sigma}_{C}^{I}(t_{n+1})
                        -\Delta t\, \widehat{\sigma}_{C}^{R}(t_{n+1}).                
    \end{align*} Consequently, we have shown that the sum discretization expression for the compartment $C$ is equivalent to the update discretization from~\eqref{eq:diskret_update}.
    For $I$, $H$ and $U$ the statement can be shown analogously. The proof for $E$ is a simplification of the proof for $C$. The equivalence of both discretizations for compartment $D$ can directly be seen through
    \begin{align*}
        \widehat{D}(t_{n+1}) &= \Delta t \, \sum_{i=a}^{n}\widehat{\sigma}_U^D(t_{i+1}) =  \Delta t \sum_{i=a}^{n-1 }\widehat{\sigma}_U^D(t_{i+1}) + \Delta t \, \widehat{\sigma}_U^D(t_{n+1}) \\
        &= \widehat{D}(t_n)  + \Delta t \,\widehat{\sigma}_U^D(t_{n+1}).
    \end{align*}
    Analogously, we obtain the equivalence of both discretizations of $R$.
\end{proof}
For the remainder of this paper, we assume that all derivatives of $\gamma_{z_1}^{z_2}$ are approximated using the backwards difference scheme. This allows us to use the update and sum discretization equivalently.

\subsection{Initialization of the discretized system}\label{sec:initialization}
In order to simulate realistic scenarios, we need to feed \AW{reported} data into our model. Initialization is a nontrivial detail but often neglected to be described. In the IDE model, we consider past flows to determine current disease dynamics. Accordingly, we need knowledge of the transitions before
the start time of our simulation, which must be derived from the actual data. The initialization of the IDE model is more challenging than that of ODE models, where only the sizes of the compartments at the start of the simulation are necessary. In this section, we assume that daily updated data on cases and deaths are available. For an example on how reported data can be used in the case of COVID-19 in Germany, we refer to~\cref{sec:covid_scenario}.

For the sake of simplicity, we assume that testing is symptom-based. In particular, this means that we assume that only symptomatic cases are detected. We assume that there is no dark figure, meaning that all symptomatic cases are tested and reported -- otherwise the formulas given below could be scaled by a constant or time changing detection ratio. Furthermore, we assume that cases are reported as soon as individuals develop symptoms, or in other words as soon as they enter compartment $I$ -- again, otherwise, a particular delay or a delay distribution could be introduced. With these assumptions, the daily reported cases correspond to the number of transitions from $C$ to $I$ within one day. Analogously, we assume that all deaths are reported. Furthermore, in accordance with the possible flows in the model, it is assumed that individuals only die after they have been in the intensive care unit. 

In order to use the daily reported case data as input to our model, the model itself must be adapted in the following way. The current discretized model evaluates the transitions at discrete time points. However, the data provides information on the number of new infections or deaths occurring within a day, thus within a time interval. Consequently, we define the transitions $\widetilde{\sigma}_{z_1}^{z_2}(t_n)$ for matching $z_1,z_2\in\mathcal{Z}$, that describe the number of individuals transitioning from $z_1$ to $z_2$ within the time interval $(t_{n-1}, t_n]$. The transitions $\widetilde{\sigma}_{z_1}^{z_2}(t_n)$ are related to the discretized transitions $\widehat{\sigma}_{z_1}^{z_2}(t_n)$, as defined in~\cref{sec:discretization}, as follows:
\begin{align*}
    \widetilde{\sigma}_{z_1}^{z_2}(t_n) = \int_{t_{n-1}}^{t_n} \sigma_{z_1}^{z_2}(t) \ dt \approx \Delta t \, \widehat{\sigma}_{z_1}^{z_2}(t_n),
\end{align*}
where we approximate $\sigma_{z_1}^{z_2}$ as in~\cref{sec:discretization} by $\widehat{\sigma}_{z_1}^{z_2}$ on the interval $(t_{n-1}, t_n]$ and then approximate the integral with a right rectangular rule. With this approximation, we can replace all $\widehat{\sigma}_{z_1}^{z_2}(t_n)$ in the equations in~\cref{sec:discretization} by $\widehat{\sigma}_{z_1}^{z_2}(t_n)\approx\widetilde{\sigma}_{z_1}^{z_2}(t_n)\hspace{0.1em}\AW{/}\Delta t$.

The reported case data gives us values for $\widetilde{\sigma}_C^I(t_n)$, assuming an interval of one day, i.e., $\Delta t=1$. When using smaller time steps, we use linear interpolation to obtain intermediate values. 

Then, the obtained data allows for the calculation of subsequent flows following compartment $I$, using the equations of the sum discretization as presented in~\eqref{eq:diskretisierung_sigma}.

Now, all that is missing are the approximations for the transitions from the compartment $S$ to $E$ and from $E$ to $C$. Here, we use a simplification by using the mean stay time of individuals in $C$ who will transit to $I$, denoted by $T_C^I$, and shifting $\widetilde{\sigma}_C^I$ accordingly. This results in
\begin{align*}
\widetilde{\sigma}_E^C(t_i)=\frac{1}{\mu_C^I}\, \widetilde{\sigma}_C^I(t_i+[{T}_C^I]),
\end{align*}
where we round $T_C^I$ to the next multiple of $\Delta t$, $[{T}_C^I]$, and use that individuals transit from $C$ to $I$ with probability $\mu_C^I$. 
Similarly, we use the mean stay time of individuals in $E$, denoted by $T_E^C$, and shift the flow appropriately, resulting in
\begin{align*}
\widetilde{\sigma}_S^E(t_i)=\frac{1}{\mu_C^I} \widetilde{\sigma}_C^I(t_i+[T_E^C+T_C^I]),
\end{align*} where we round $T_E^C+T_C^I$ again to the next multiple of $\Delta t$, $[T_E^C+T_C^I]$. Using the results for $\widetilde{\sigma}_S^E(t_i)$, we can also compute the flow from $C$ to $R$ as in equation~\eqref{eq:diskretisierung_sigma}. 

Using this scheme, all flows needed to initialize the model can be derived from the reported data.
Note that depending on the distribution that is chosen for the transitions, the support of the respective survival function $\gamma_{z_1}^{z_2}$ may be large to infinite. However, the assumption that a single first index case appeared at $-\infty<a<0$ means that $\gamma_{z_1}^{z_2}$ only needs to be evaluated for flows down to $a$. 

In the application, we numerically only evaluate $\gamma_{z_1}^{z_2}$ on the interval $(0, q)$, where $q \in \mathbb{R}_+$ is set such that $\gamma_{z_1}^{z_2}(q) > \varepsilon$ for some appropriately selected $\varepsilon >0$. 

Finally, we set the number of deaths at the simulation start which we assume to be known. 

\section{Theoretical properties of the numerical solution}\label{sec:theory}
In this section, we will show that the discretization scheme(s) preserves important properties of the disease dynamics, such as the positivity of all components and flows. Note that, with the assumption of~\cref{thm:discretizations_equivalent}, we can use both discretization schemes equivalently. In the course of this section, we will make meaningful assumptions regarding transition distributions and parameters. 

We begin with three lemmas that are fundamental to the subsequent theorems. The initial proposition demonstrates that the discrete scheme is mass-conserving.

\begin{lemma} \label{lem:massenerhaltung}
   Let ${\widehat{\gamma}{}_{z_1}^{z_2}}'$ for suitable combinations $z_1,z_2\in\mathcal{Z}$ be approximated as in~\cref{thm:discretizations_equivalent}. Furthermore, let the sum of all compartments at time $t_0$ be equal to the total population
    \begin{align*}
        \widehat{S}(t_0) + \widehat{E}(t_0) + \widehat{C}(t_0) + \widehat{I}(t_0) + \widehat{H}(t_0) + \widehat{U}(t_0) + \widehat{R}(t_0) + \widehat{D}(t_0) = N.
    \end{align*}
    
    Then, the sum of all compartments is equal to the total population at all subsequent time points $t_n, n\in\mathbb{N}$. 
\end{lemma}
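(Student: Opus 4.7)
The plan is to exploit the equivalence between the sum and update discretizations established in Theorem~\ref{thm:discretizations_equivalent}, which is applicable here since the hypothesis on the backwards difference approximation of ${\widehat{\gamma}{}_{z_1}^{z_2}}'$ is assumed. This reduces the claim to a one-step telescoping identity for the update form~\eqref{eq:diskret_update}, which is then propagated to all $t_n$ by induction on $n$.

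First, I would bring the susceptible equation into the same increment form as the other compartments. Combining~\eqref{eq:sigmaSE_discrete} with the backwards finite difference~\eqref{eq:S_backwardsdifference} gives
\begin{align*}
\widehat{S}(t_{n+1}) - \widehat{S}(t_n) = -\Delta t\,\widehat{\sigma}_S^E(t_{n+1}),
\end{align*}
so that, together with~\eqref{eq:diskret_update}, every compartment $Z \in \mathcal{Z}$ satisfies an increment equation whose right-hand side is a signed sum of flows $\pm\Delta t\,\widehat{\sigma}_{z_1}^{z_2}(t_{n+1})$. Summing these eight equations, the key observation is that every flow $\widehat{\sigma}_{z_1}^{z_2}(t_{n+1})$ appears exactly twice: with coefficient $+\Delta t$ in the equation for $\widehat{z_2}$ (as an inflow) and with coefficient $-\Delta t$ in the equation for $\widehat{z_1}$ (as an outflow). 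Hence all contributions cancel, yielding
\begin{align*}
\sum_{Z\in\mathcal{Z}} \widehat{Z}(t_{n+1}) = \sum_{Z\in\mathcal{Z}} \widehat{Z}(t_n).
\end{align*}

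Combined with the hypothesis $\sum_{Z\in\mathcal{Z}} \widehat{Z}(t_0) = N$, a one-line induction on $n \in \mathbb{N}$ gives the desired conclusion. The main difficulty is not analytical but combinatorial bookkeeping: one must verify that the pairing of each $+\widehat{\sigma}_{z_1}^{z_2}$ with the corresponding $-\widehat{\sigma}_{z_1}^{z_2}$ in~\eqref{eq:diskret_update} (supplemented by the increment equation for $\widehat{S}$) is complete, i.e., no flow is orphaned. This can be checked directly against the arrows in Figure~\ref{fig:IDESECIR}, which has $S$ as its only source, $R$ and $D$ as absorbing sinks, and every arrow connecting two compartments inside $\mathcal{Z}$; the verification is routine and imposes no additional assumptions on the parameters $\mu_{z_1}^{z_2}$ or on the transition kernels $\gamma_{z_1}^{z_2}$.
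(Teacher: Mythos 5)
Your proposal is correct and follows essentially the same route as the paper's proof: both rewrite the $\widehat{S}$ equation in increment form via~\eqref{eq:sigmaSE_discrete} and~\eqref{eq:S_backwardsdifference}, sum the update-discretization equations~\eqref{eq:diskret_update} so that every flow $\widehat{\sigma}_{z_1}^{z_2}(t_{n+1})$ cancels between its source and destination compartments, and conclude by induction on $n$. The explicit remark that the backwards-difference hypothesis is what licenses the use of the update form is exactly the role it plays in the paper as well.
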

\begin{proof}
    We show the statement by induction. By assumption, the statement holds for $t_0$.

    Now assume that the statement holds for $t_n$. The number of individuals in compartment $S$ at $t_{n+1}$ given by~\eqref{eq:diskretisierung_S} is 
    \begin{align*}
        \widehat{S}(t_{n+1}) &= \widehat{S}(t_n) - \Delta t\,\widehat{\sigma}_S^E(t_{n+1});
    \end{align*}
    cf.~\eqref{eq:sigmaSE_discrete} and~\eqref{eq:S_backwardsdifference}.
    By inserting this formula and using  of~\eqref{eq:diskret_update}, we obtain by summation that
    \begin{align*}
        \widehat{S}(t_{n+1}) &+ \widehat{E}(t_{n+1}) + \widehat{C}(t_{n+1}) + \widehat{I}(t_{n+1}) + \widehat{H}(t_{n+1}) + \widehat{U}(t_{n+1}) + \widehat{R}(t_{n+1}) + \widehat{D}(t_{n+1})\\
        = \, &\widehat{S}(t_{n}) + \widehat{E}(t_{n}) + \widehat{C}(t_{n}) + \widehat{I}(t_{n}) + \widehat{H}(t_{n}) + \widehat{U}(t_{n}) + \widehat{R}(t_{n}) + \widehat{D}(t_{n})\\
        = \, &N.
    \end{align*}
    This concludes our proof. 
\end{proof}

The following lemma demonstrates that, if the assumptions are satisfied, not all individuals will die. 
\begin{lemma}\label{lem:not_all_individuals_die}
    Let
\begin{align}
    \mu_C^I \, \mu_I^H \, \mu_H^U \, \mu_U^D <1 \label{eq:condition_notalldead_lemma}
\end{align}
as well as $\Delta t >0$. \AW{Assume that nonnegative transitions $\widehat{\sigma}_{z_1}^{z_2}(t_i)$ for $i \in \{a, \dots, n\}$ are given \AW{for suitable combinations $z_1,z_2\in\mathcal{Z}$} for some $n \in\mathbb{N}_0$ that is fixed but arbitrary.}

In addition, we assume
\begin{align*}
    \widehat{D}(t_0) < N
\end{align*}
as well as
\begin{align*}
    \widehat{S}(t_0) + \widehat{E}(t_0) + \widehat{C}(t_0) + \widehat{I}(t_0) + \widehat{H}(t_0) + \widehat{U}(t_0) + \widehat{R}(t_0) + \widehat{D}(t_0) = N.
\end{align*}
Furthermore, let
\begin{align}\label{eq:all_compartments_positive}
    \widehat{Z}(t_{n}) \geq 0
\end{align}
for $\widehat{Z} \in \widehat{\mathcal{Z}}:=\{\widehat S,\widehat E,\widehat C,\widehat I,\widehat H,\widehat U,\widehat R, \widehat D\}$.

Then it holds that
\begin{align*}
    \widehat{D}(t_\AW{n}) < N.
\end{align*}
\AW{This statement also holds as $n$ approaches infinity, i.e.,
\begin{align*}
    \lim_{n\to\infty} \widehat{D}(t_{n}) < N.
\end{align*}}

\end{lemma}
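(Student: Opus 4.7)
The plan is to leverage mass conservation first and then establish the strict inequality via a cascade of flow bounds, using the hypothesis on the product of the $\mu$s.

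First, I would invoke \cref{lem:massenerhaltung} to obtain mass conservation at $t_n$, so that $\widehat{D}(t_n) + \sum_{Z \in \mathcal{Z}\setminus\{D\}} \widehat{Z}(t_n) = N$. Together with the nonnegativity assumption~\eqref{eq:all_compartments_positive}, this immediately gives the non-strict bound $\widehat{D}(t_n) \leq N$. The actual task is then to sharpen this to a strict inequality using the hypothesis $\mu_C^I \mu_I^H \mu_H^U \mu_U^D < 1$.

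Next, I would telescope the cumulative flow into $D$ through the cascade $C \to I \to H \to U \to D$, using the backwards-difference form of the discretized $\gamma$ derivatives (the same structure exploited in the proof of \cref{thm:discretizations_equivalent}). Substituting $-\Delta t\, {\widehat{\gamma}_U^D}'(t_{n+1-i}) = \gamma_U^D(t_{n-i}) - \gamma_U^D(t_{n+1-i}) \geq 0$ into the discrete expression for $\widehat{\sigma}_U^D$ from~\eqref{eq:diskretisierung_sigma}, then summing $\Delta t \sum_{k=0}^{n-1}\widehat{\sigma}_U^D(t_{k+1})$ and swapping the order of summation, the inner $k$-sum telescopes to a factor in $[0,1]$ and yields
\[
\Delta t \sum_{k=0}^{n-1}\widehat{\sigma}_U^D(t_{k+1}) \;\leq\; \mu_U^D \, \Delta t \sum_{i=a}^{n-1}\widehat{\sigma}_H^U(t_{i+1}).
\]
Iterating this along $H \leftarrow I \leftarrow C \leftarrow E$ produces a combined bound of the form $\widehat{D}(t_n) - \widehat{D}(t_0) \leq \mu_C^I \mu_I^H \mu_H^U \mu_U^D \cdot (\text{cumulative } E \to C \text{ flow}) + (\text{correction terms from initial mass in } I, H, U)$, each correction weighted by the product of the $\mu$s downstream of the corresponding compartment.

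Combined with mass conservation and nonnegativity at $t_0$, the cumulative quantities on the right can be bounded in terms of the initial compartment values, and the strict inequality $\mu_C^I \mu_I^H \mu_H^U \mu_U^D < 1$ then forces $\widehat{D}(t_n) - \widehat{D}(t_0) < N - \widehat{D}(t_0)$, i.e.\ $\widehat{D}(t_n) < N$. The asymptotic statement follows by taking $n\to\infty$ in the same cascade: since each $\gamma_{z_1}^{z_2}(\tau)\to 0$, the telescoping factors converge to $1$ and one obtains an explicit final-size expression for $\widehat{D}_\infty$ whose coefficients are products of the $\mu$s, and the strict product hypothesis again gives the strict inequality; the monotonicity of $\widehat{D}$ then bounds every finite-$n$ value above by this limit.

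The main obstacle I anticipate is carefully accounting for mass already located in $I$, $H$, or $U$ at $t_0$, since a downstream product of $\mu$s (for example $\mu_I^H \mu_H^U \mu_U^D$) may equal $1$ even when $\mu_C^I \mu_I^H \mu_H^U \mu_U^D < 1$. To handle this, one has to identify the (at least one) factor in the product which is strictly less than $1$, guaranteed by the hypothesis, and argue that a positive fraction of the cumulative flow crossing that stage is absorbed by $R$ rather than propagating further downstream, so that not all non-$D$ mass can collapse into $D$.
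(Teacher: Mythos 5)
Your route is genuinely different from the paper's. The paper argues qualitatively: it picks one compartment $Z$ with $\mu_Z^R>0$ (guaranteed by $\mu_C^I\mu_I^H\mu_H^U\mu_U^D<1$), bounds $\widehat{R}(t_n)\geq \Delta t\,\widehat{\sigma}_Z^R(t_n)$ via \eqref{eq:diskret_sum}, and then runs a case distinction showing that either a strictly positive amount of mass sits in $R$ or in $Z$ at time $t_n$, or no flow has ever entered $Z$ and the downstream compartments retain their initial values, so that $\widehat{D}(t_n)=\widehat{D}(t_0)<N$; mass conservation (\cref{lem:massenerhaltung}) then converts ``positive mass outside $D$'' into $\widehat{D}(t_n)<N$. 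Your telescoping of the cumulative flows through the cascade is sound as far as it goes: with the backwards-difference form of ${\widehat{\gamma}{}_{z_1}^{z_2}}'$ the inner sums do telescope to factors in $[0,1]$, and iterating gives an explicit final-size-type bound of the form $\widehat{D}(t_n)\leq \widehat{D}(t_0)+\widehat{U}(t_0)+\mu_U^D\widehat{H}(t_0)+\mu_U^D\mu_H^U\widehat{I}(t_0)+\mu_U^D\mu_H^U\mu_I^H\widehat{C}(t_0)+\mu_U^D\mu_H^U\mu_I^H\mu_C^I\bigl(\widehat{E}(t_0)+\widehat{S}(t_0)\bigr)$, which together with mass conservation yields $\widehat{D}(t_n)\leq N$. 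This is a nice quantitative byproduct the paper's proof does not provide.

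The gap is the passage from $\leq N$ to $<N$, which is the entire content of the lemma beyond \cref{lem:massenerhaltung}, and your proposal does not close it. Your final paragraph correctly identifies the obstacle but the proposed resolution is a restatement of what must be shown, not an argument: ``a positive fraction of the cumulative flow crossing that stage is absorbed by $R$'' gives you nothing when no flow crosses that stage. Concretely, if the only strict factor is $\mu_C^I<1$ but at $t_0$ all non-$D$ mass already sits in $I$, $H$ or $U$ (so $\widehat{S}(t_0)=\widehat{E}(t_0)=\widehat{C}(t_0)=\widehat{R}(t_0)=0$) and $\mu_I^H=\mu_H^U=\mu_U^D=1$, your bound degenerates to $\widehat{D}(t_n)\leq N$ and cannot be sharpened by this method, because the coefficient multiplying $\widehat{I}(t_0)$ is exactly $1$. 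To finish along your lines you would need an additional case analysis on \emph{where} the positive mass resides at $t_0$ and on \emph{which} factor of the product is strictly below $1$ -- at which point you are essentially reconstructing the paper's case distinction on top of your telescoping machinery. You should either carry out that case analysis explicitly or switch to the paper's qualitative argument for the strictness step; as written, the proof is incomplete at its decisive point.
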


\begin{proof}
    Assumption~\eqref{eq:condition_notalldead_lemma} implies that there is at least one $Z\in\mathcal{Z}$ such that $\mu_{Z}^{R}>0$. Without loss of generality, we assume that it holds $\mu_C^R = 1-\mu_C^I>0$. Given that all transitions are nonnegative, it can be concluded from the equation for $\widehat{R}$ in~\eqref{eq:diskret_sum} that
    \begin{align}\label{eq:R_bigger_sigmaCR}
        \widehat{R}(t_\AW{{n}}) \geq \Delta t \, \widehat{\sigma}_C^R(t_\AW{{n}}).
    \end{align}
    \AW{By~\eqref{eq:diskretisierung_sigma}, $ \widehat{\sigma}_C^R(t_{n})$ is defined} as
    \begin{align*}
        \widehat{\sigma}_C^R(t_\AW{{n}}) = -\Delta t \sum_{i=a}^{n\AW{-1}}  \left(1-\mu_C^I\right)\,{\widehat{\gamma}{}_C^R}'(t_\AW{{n-i}}) \,\widehat{\sigma}_E^C(t_{i+1}).
    \end{align*}
    We distinguish between two cases:
    \begin{enumerate}
        \item Assume that there exists an index $i \in \{a,\dots,n\AW{-1}\}$ such that 
        \begin{align*}
            -{\widehat{\gamma}{}_C^R}'(t_\AW{{n-i}}) \,\widehat{\sigma}_E^C(t_{i+1})>0.
        \end{align*} This directly implies $\widehat{R}(t_\AW{n})>0$, see equation~\eqref{eq:R_bigger_sigmaCR}. The application of~\cref{lem:massenerhaltung} together with assumption~\eqref{eq:all_compartments_positive} leads to the conclusion that it holds $\widehat{D}(t_\AW{n}) <N$.

        \AW{ When $n$ is approaching infinity, we can apply the same argument and obtain
        \begin{align*}
            \lim_{n\to\infty} \widehat{R}(t_{n}) > 0.
        \end{align*}
        This implies
        \begin{align*}
            \lim_{n\to\infty} \widehat{D}(t_{n}) \leq N - \lim_{n\to\infty} \widehat{R}(t_{n}) < N.
        \end{align*}
        }
        
        \item Assume that there exists no index $i \in \{a,\dots,n\AW{-1}\}$ such that
        \begin{align*}
            -{\widehat{\gamma}{}_C^R}'(t_\AW{{n-i}}) \,\widehat{\sigma}_E^C(t_{i+1})>0.
        \end{align*} 

        \begin{enumerate}[label=(\roman*)]
            \item Either there exists some $i$ such that $\widehat{\sigma}_E^C(t_{i+1})>0$. \AW{This implies that} $-{\widehat{\gamma}{}_C^R}'(t_{\AW{n-i}})=0$. 
            \AW{We observe that $\gamma_C^R$ is not constant on $(0,\infty)$. This implies that there exists at least one $j\in\mathbb{N}$ such that $-{\gamma_C^R}{}'(t_j)>0$. For simplicity, we only consider the case with one such index $j$; in the case of multiple such indices, one can extend the argument analogously.

            In the here considered case of $\widehat{\sigma}_E^C(t_{i+1})>0$ and $-{\widehat{\gamma}{}_C^R}'(t_{n-i})=0$, it holds that $n-i\neq j$. This means that} there is some nontrivial flow from $E$ to $C$ at $t_{i+1}$ but the respective individuals are still \AW{remaining} in compartment $C$ \AW{or they have already transitioned to $R$} at time $t_\AW{n}$ as the approximated change from $C$ to $R$ is zero. \AW{More precisely, either it holds $n-i<j$ which implies $\widehat{C}(t_{n})>0$, or else it holds $n-i>j$ which implies $\widehat{R}(t_{n})>0$. W}ith~\eqref{eq:all_compartments_positive} and~\cref{lem:massenerhaltung} it follows that $\widehat{D}(t_\AW{n})<N$.

            \AW{As $n$ approaches infinity, the inequality $n-i<j$ does not hold for $i\in\{a,\dots,n-1\}$ for $n$ large enough. Hence, if it holds $\widehat{\sigma}_E^C(t_{i+1})>0$ and $-{\widehat{\gamma}{}_C^R}'(t_{n-i})=0$ for $i\in\{a,\dots,n-1\}$, we have $n-i>j$ and thus $\widehat{R}(t_{n})>0$. In other words, if $n$ is large enough, all individuals have already transitioned to compartment $R$ at time $t_{n}$
            which implies
            \begin{align*}
            \lim_{n\to\infty} \widehat{R}(t_{n})  > 0,
            \end{align*}
            and thus
            \begin{align*}
                \lim_{n\to\infty} \widehat{D}(t_{n}) < N.
            \end{align*}}

            \item If there exists no such $i$, then we have $\widehat{\sigma}_E^C(t_{i+1})=0$ for all $i \in \{a,\dots,n\AW{-1}\}$. This implies that there has been no flow from $E$ to $C$ until $t_\AW{n}$. Thus, we have
            \begin{align*}
                \widehat{Z}(t_\AW{n}) = \widehat{Z}(t_0)
            \end{align*}
            for all subsequent compartments $\widehat Z \in \{\widehat C,\widehat  I,\widehat  H,\widehat  U,\widehat  R,\widehat  D\}$. By assumption, it holds $\widehat{D}(t_0)<N$ and consequently $\widehat{D}(t_\AW{n})<N$.

            \AW{When $n$ is approaching infinity, we can apply the same argument and obtain
            \begin{align*}
                \lim_{n\to\infty} \widehat{D}(t_{n}) = D(0) < N.
            \end{align*}}
        \end{enumerate} 
    \end{enumerate}
\end{proof}

\noindent To proof~\cref{thm:eigenschaften_flows_kompartimente}, one more lemma is needed.

\begin{lemma}\label{lem:bounded_series}
    Let $\Delta t >0$ be arbitrary but fixed and ${\widehat{\gamma}{}_{z_1}^{z_2}}'$ be the backwards difference approximation of ${\gamma_{z_1}^{z_2}}'$. Then there exists an $M\in\mathbb{R}$ such that
    \begin{align*}
        \sum_{n=0}^\infty -{\widehat{\gamma}{}_{z_1}^{z_2}}'(t_{n+1}) \leq M < \infty.
    \end{align*} 
\end{lemma}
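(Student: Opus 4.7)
The plan is to exploit the telescoping structure of the backwards-difference approximation. By definition~\eqref{eq:gamma_backwards_difference},
\begin{align*}
-{\widehat{\gamma}{}_{z_1}^{z_2}}'(t_{n+1}) = \frac{\gamma_{z_1}^{z_2}(t_n) - \gamma_{z_1}^{z_2}(t_{n+1})}{\Delta t},
\end{align*}
so that the partial sums collapse:
\begin{align*}
\sum_{n=0}^{N} -{\widehat{\gamma}{}_{z_1}^{z_2}}'(t_{n+1}) = \frac{1}{\Delta t}\sum_{n=0}^{N} \bigl(\gamma_{z_1}^{z_2}(t_n) - \gamma_{z_1}^{z_2}(t_{n+1})\bigr) = \frac{\gamma_{z_1}^{z_2}(t_0) - \gamma_{z_1}^{z_2}(t_{N+1})}{\Delta t}.
\end{align*}

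Next I would use the properties of $\gamma_{z_1}^{z_2}$ stated in~\cref{sec:IDE-SECIR}: the normalization $\gamma_{z_1}^{z_2}(0)=1$ gives $\gamma_{z_1}^{z_2}(t_0)=1$, and monotone decrease together with finiteness of the expected stay time $\int_0^\infty \gamma_{z_1}^{z_2}(\tau)\,\d\tau<\infty$ yields $\lim_{\tau\to\infty}\gamma_{z_1}^{z_2}(\tau)=0$ (as already noted in the excerpt). Since $\gamma_{z_1}^{z_2}$ is monotonically decreasing, each summand $-{\widehat{\gamma}{}_{z_1}^{z_2}}'(t_{n+1})$ is nonnegative, so the partial sums are monotone nondecreasing in $N$ and the limit exists.

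Passing to the limit $N\to\infty$ in the telescoped expression therefore yields
\begin{align*}
\sum_{n=0}^{\infty} -{\widehat{\gamma}{}_{z_1}^{z_2}}'(t_{n+1}) = \frac{1 - 0}{\Delta t} = \frac{1}{\Delta t},
\end{align*}
so the claim holds with $M := 1/\Delta t$. There is essentially no obstacle here: the proof is a one-line telescoping computation together with the boundary behavior of $\gamma_{z_1}^{z_2}$ already guaranteed by the modeling assumptions. The only subtlety worth spelling out is that the nonnegativity of the summands justifies taking the limit term-by-term (equivalently, monotone convergence of partial sums), which ensures the infinite series is well-defined and equal to the telescoped limit.
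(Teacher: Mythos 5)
Your proof is correct and is essentially identical to the paper's: both telescope the backwards-difference sum to $\bigl(\gamma_{z_1}^{z_2}(t_0)-\gamma_{z_1}^{z_2}(t_{N+1})\bigr)/\Delta t$ and use $\gamma_{z_1}^{z_2}(0)=1$ together with $\gamma_{z_1}^{z_2}(\tau)\to 0$ to conclude the sum equals $1/\Delta t$. Your added remark on nonnegativity of the summands justifying the passage to the limit is a minor (and welcome) elaboration of what the paper leaves implicit.
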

\begin{proof}
    We start by using the backwards difference approximation as in~\eqref{eq:gamma_backwards_difference} and obtain
    \begin{align*}
        \sum_{n=0}^\infty -{\widehat{\gamma}{}_{z_1}^{z_2}}'(t_{n+1}) &= \lim_{K\to\infty} \sum_{n=0}^K -{\widehat{\gamma}{}_{z_1}^{z_2}}'(t_{n+1})\\
        &= \lim_{K\to\infty} \sum_{n=0}^K - \frac{\gamma_{z_1}^{z_2}(t_{n+1}) - \gamma_{z_1}^{z_2}(t_n)}{\Delta t}\\
        &= -\frac{1}{\Delta t} \left(\lim_{K\to\infty} \gamma_{z_1}^{z_2}(t_\AW{{K+1}}) - \gamma_{z_1}^{z_2}(t_0)\right)\\
        &= \frac{1}{\Delta t} \leq M < \infty,
    \end{align*}
    where we used that $1-\gamma_{z_1}^{z_2}$ is a CDF \AW{which implies that} $\gamma_{z_1}^{z_2}$ converges to zero.
\end{proof}

The following theorem demonstrates that the discretization scheme preserves important properties of the disease dynamics regarding positivity and boundedness. Our theorem generalizes the corresponding part of Theorem~$3.4$ of~\cite{messina_non-standard_2022}.

\begin{theorem}  \label{thm:positivity_boundedness}
    Let
\begin{align*}
    \mu_C^I \, \mu_I^H \, \mu_H^U \, \mu_U^D <1 
\end{align*}
and $\Delta t >0$ be arbitrary but fixed. Assume that nonnegative transitions $\widehat{\sigma}_{z_1}^{z_2}(t_\AW{i})$ for $\AW{i} \in \{a, \dots, 0\}$ are given \AW{for suitable combinations $z_1,z_2\in\mathcal{Z}$}.
Let it hold that
\begin{align*}
    0 \leq \widehat{Z}(t_0) \leq N
\end{align*}
for $\widehat{Z} \in \widehat{\mathcal{Z}}\backslash \{\widehat{D}\}$
as well as
\begin{align} \label{eq:D0_smaller_N}
    0 \leq \widehat{D}(t_0) < N
\end{align}
and
\begin{align*}
    \widehat{S}(t_0) + \widehat{E}(t_0) + \widehat{C}(t_0) + \widehat{I}(t_0) + \widehat{H}(t_0) + \widehat{U}(t_0) + \widehat{R}(t_0) + \widehat{D}(t_0) = N.
\end{align*}

Let $\widehat{\sigma}_{z_1}^{z_2}(t_{n})$, $\widehat{Z}(t_{n})$ with $\widehat{Z} \in \widehat{\mathcal{Z}}$, and $\widehat{\AW{\lambda}}(t_{n})$ for $n \in \mathbb{N}_0$ be the discrete solutions to our model as defined in~\eqref{eq:diskretisierung_S}, \eqref{eq:diskretisierung_phi},~\eqref{eq:diskretisierung_sigma} and~\eqref{eq:diskret_sum}.

Then the following statements hold for all $n \in \mathbb{N}_0$:
\begin{itemize}
    \item $\widehat{\sigma}_{z_1}^{z_2}(t_n)$ is nonnegative for all suitable combinations $z_1, z_2 \in \mathcal{Z}$,
    \item $0 \leq \widehat{Z}(t_n) \leq N$ for $\widehat{Z} \in  \widehat{\mathcal{Z}}$, in particular $\widehat{D}(t_n) <N$, and
    \item $\widehat{\AW{\lambda}}(t_n)$ is nonnegative.
\end{itemize}
    
\end{theorem}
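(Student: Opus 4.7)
The plan is to establish the three bullet points jointly by strong induction on $n$, because the flows at $t_{n+1}$ depend on all of the flow history back to $t_a$, and the next force of infection requires the strict bound $\widehat{D}(t_n)<N$ in its denominator. The base case $n=0$ is essentially a restatement of the hypotheses: the initial compartments lie in $[0,N]$, $\widehat{D}(t_0)<N$ by \eqref{eq:D0_smaller_N}, the pre-initial transitions are assumed nonnegative, and $\widehat{\lambda}(t_0)$ is nonnegative because $N-\widehat{D}(t_0)>0$, $\phi(t_0)\geq 0$, and all factors $\xi_C$, $\xi_I$, $\rho_C$, $\rho_I$, $\gamma_{z_1}^{z_2}$, $\mu_{z_1}^{z_2}$, $1-\mu_{z_1}^{z_2}$ are in $[0,1]$.

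For the inductive step, assume the three properties hold at every index in $\{0,\ldots,n\}$. First, I would use \eqref{eq:diskretisierung_S}: since $\widehat{\lambda}(t_n)\geq 0$ the denominator $1+\Delta t\,\widehat{\lambda}(t_n)$ is at least $1$, hence $0\leq \widehat{S}(t_{n+1})\leq \widehat{S}(t_n)\leq N$, and $\widehat{\sigma}_S^E(t_{n+1})=\widehat{S}(t_{n+1})\,\widehat{\lambda}(t_n)\geq 0$. I would then propagate nonnegativity through the chain $\widehat{\sigma}_E^C, \widehat{\sigma}_C^I, \widehat{\sigma}_C^R, \widehat{\sigma}_I^H, \widehat{\sigma}_I^R, \widehat{\sigma}_H^U, \widehat{\sigma}_H^R, \widehat{\sigma}_U^D, \widehat{\sigma}_U^R$ in the order dictated by \eqref{eq:diskretisierung_sigma}. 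The pivotal observation is that the backwards difference approximation
\begin{align*}
-{\widehat{\gamma}{}_{z_1}^{z_2}}'(t_k) \;=\; \frac{\gamma_{z_1}^{z_2}(t_{k-1})-\gamma_{z_1}^{z_2}(t_k)}{\Delta t}
\end{align*}
is nonnegative because each $\gamma_{z_1}^{z_2}$ is monotonically decreasing. Therefore every flow at $t_{n+1}$ is a sum of nonnegative weights ($1$, $\mu_{z_1}^{z_2}$, or $1-\mu_{z_1}^{z_2}$) times $-{\widehat{\gamma}{}_{z_1}^{z_2}}'$ times flows already shown to be nonnegative, either by induction (for indices $\leq n$) or by the preceding step of this argument (for index $n+1$).

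Having established nonnegativity of all flows at $t_{n+1}$, I would pass to the compartments by using the sum discretization \eqref{eq:diskret_sum} rather than the equivalent update form \eqref{eq:diskret_update}. In this representation each $\widehat{Z}(t_{n+1})$ with $\widehat{Z}\in\widehat{\mathcal{Z}}\setminus\{\widehat{S}\}$ is a finite sum of products of nonnegative quantities (values of $\gamma_{z_1}^{z_2}$ in $[0,1]$, convex coefficients $\mu_{z_1}^{z_2}$ or $1-\mu_{z_1}^{z_2}$, and the just-established nonnegative flows), so $\widehat{Z}(t_{n+1})\geq 0$ is automatic. The mass-conservation statement of \cref{lem:massenerhaltung} then yields $\widehat{Z}(t_{n+1})\leq N$ for every compartment, and \cref{lem:not_all_individuals_die} applied at index $n+1$ sharpens this to $\widehat{D}(t_{n+1})<N$. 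To close the induction, the nonnegativity of $\widehat{\lambda}(t_{n+1})$ follows word-for-word from the base-case argument once one uses the strict inequality $\widehat{D}(t_n)<N$ in the denominator of \eqref{eq:diskretisierung_phi} together with the nonnegativity of the transitions $\widehat{\sigma}_E^C(t_{i+1})$ and $\widehat{\sigma}_C^I(t_{i+1})$ for $i=a,\ldots,n$.

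The only delicate point, and the step I expect to require the most bookkeeping, is the strict inequality $\widehat{D}(t_{n+1})<N$: it is precisely this bound that keeps the denominator $N-\widehat{D}(t_n)$ in \eqref{eq:diskretisierung_phi} positive at the next induction step, so the positivity of $\widehat{\lambda}$ and the strict bound on $\widehat{D}$ must be tracked hand-in-hand. A naive approach using the update form \eqref{eq:diskret_update} in place of \eqref{eq:diskret_sum} would also force one to verify that outflows from a compartment never exceed its current content, which is not obviously preserved by the quadrature; the sum form \eqref{eq:diskret_sum}, guaranteed available by \cref{thm:discretizations_equivalent}, avoids this difficulty and is what makes the argument go through cleanly.
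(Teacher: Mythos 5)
Your proposal is correct and follows essentially the same route as the paper's proof: induction on $n$, nonnegativity of $\widehat{S}(t_{n+1})$ and $\widehat{\sigma}_S^E(t_{n+1})$ from the nonstandard update, propagation of nonnegativity through the flow chain via the monotonicity of $\gamma_{z_1}^{z_2}$, nonnegativity of the compartments via the sum discretization~\eqref{eq:diskret_sum}, the upper bound $N$ via~\cref{lem:massenerhaltung}, the strict bound $\widehat{D}(t_{n+1})<N$ via~\cref{lem:not_all_individuals_die}, and finally $\widehat{\lambda}(t_{n+1})\geq 0$. Your closing remark on why the sum form is preferable to the update form for establishing nonnegativity is a sound observation consistent with the paper's choice.
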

\begin{proof}
We prove the statements by induction and start by showing the properties for $n=0$. 
    \begin{itemize}
        \item The values $\widehat{\sigma}_{z_1}^{z_2}(t_0)$ are nonnegative by assumption for all suitable $z_1, z_2 \in \mathcal{Z}$. In addition, the statement on the compartment sizes holds by assumption.
        \item As defined in~\eqref{eq:diskretisierung_phi}, $\widehat{\AW{\lambda}}(t_{0})$  depends on the transitions $\widehat{\sigma}_E^C(t_i)$ and $\widehat{\sigma}_C^I(t_i)$ for $i \in \{a, \dots, 0\}$ which are nonnegative by assumption. Assumption~\eqref{eq:D0_smaller_N} implies $N-\widehat{D}(t_0) > 0$. Since all other factors in $\widehat{\AW{\lambda}}(t_{0})$ are nonnegative as well, it follows that $\widehat{\AW{\lambda}}(t_{0}) \geq 0$.
    \end{itemize}        

Now assume that the properties hold for the time points $t_0, \dots, t_{n}$. 
    \begin{itemize}
        \item We argue analogously to~\cite{messina_non-standard_2022} that it holds
        \begin{align}
            \widehat{S}(t_{n+1}) = \frac{\widehat{S}(t_n)}{1+ \Delta t \, \widehat{\AW{\lambda}}(t_n)} \geq 0, \label{eq:S_nonnegative}
        \end{align}
        since by assumption $\widehat{S}(t_n) \geq 0$ and $\widehat{\AW{\lambda}}(t_n) \geq 0$. 
    \item This also leads to the bound
    \begin{align*}
        \widehat{\sigma}_S^E(t_{n+1}) = \widehat{S}(t_{n+1}) \ \widehat{\AW{\lambda}}(t_n) \geq 0.
    \end{align*}  
    By looking at the equations in~\eqref{eq:diskretisierung_sigma}, we observe that in consequence, all other transitions are nonnegative as well since $\gamma_{z_1}^{z_2}$ is monotonously decreasing and thus for the derivative it holds that ${\widehat{\gamma}{}_{z_1}^{z_2}}' \leq 0$.
    \item It follows with~\eqref{eq:diskret_sum} that all remaining approximations to compartments $\widehat{Z} \in \widehat{\mathcal{Z}}\backslash\{\widehat{S}\}$ are nonnegative at $t_{n+1}$ since all transitions as well as all $\gamma_{z_1}^{z_2}$ are nonnegative and we have $0\leq \mu_{z_1}^{z_2}\leq1$ for all valid transitions. 
    \item As we know from~\cref{lem:massenerhaltung} that 
    \begin{align*}
        \sum_{\widehat{Z} \in \mathcal{\widehat{Z}}}\widehat{Z}(t_{n+1})=N
    \end{align*} and since we obtained $\widehat{Z}(t_{n+1}) \geq 0$ for all $\widehat{Z}\in \mathcal{\widehat{Z}}$, we conclude that it holds $\widehat{Z}(t_{n+1}) \leq N$ for $\widehat{Z}\in  \mathcal{\widehat{Z}}$.
    \item Since all compartments at time point $t_{n+1}$ as well as the necessary transitions are nonnegative, we can apply ~\cref{lem:not_all_individuals_die} to obtain that $\widehat{D}(t_{n+1})<N$. 
    \item In particular, the previous observation implies $N-\widehat{D}(t_{n+1})>0$. Since all other factor\AW{s} in the force of infection term are nonnegative, we conclude that $\widehat{\AW{\lambda}}(t_{n+1}) \geq 0$.
    \end{itemize}
\end{proof}

In the following theorem, we will discuss how our numerical solutions behave in the limit. This is a generalization of the remaining statements of Theorem~$3.4$ in~\cite{messina_non-standard_2022}.
\begin{theorem}
\label{thm:eigenschaften_flows_kompartimente}
Let all the assumptions from~\cref{thm:positivity_boundedness} hold. 

\AW{Additionally, we assume the transitions $\widehat{\sigma}_{z_1}^{z_2}(t_\AW{i})$ for $\AW{i} \in \{a, \dots, 0\}$ are bounded for suitable combinations $z_1,z_2\in\mathcal{Z}$.}
Furthermore, we assume that there exists an $M<\infty$ \AW{such} that 
 \begin{align}\label{eq:assumption_xiC}
    \sum_{i=0}^\infty \AW{\xi_C(t_{i+1})\,\rho_C(t_{i+1})} \left(\mu_C^I \, \gamma_C^I (t_{i+1}) + (1- \mu_C^I) \,\gamma_C^R(t_{i+1})\right) \leq M < \infty 
\end{align}
and 
\begin{align}\label{eq:assumption_xiI}
    \sum_{i=0}^\infty \AW{\xi_I(t_{i+1})\,\rho_I(t_{i+1})}\left( \mu_I^H \, \gamma_I^H(t_{i+1}) + (1- \mu_I^H) \, \gamma_I^R(t_{i+1})\right) \leq M < \infty
\end{align}
for all $n \in \mathbb{N}_0$.
Note that this is trivially satisfied if $\xi_C$\AW{, }$\xi_I$\AW{, $\rho_C$, and $\rho_I$} have finite support in the infection age or vanish sufficiently fast.

Let $\widehat{\sigma}_{z_1}^{z_2}(t_{n})$, $\widehat{Z}(t_{n})$ with $\widehat{Z} \in \widehat{\mathcal{Z}}$, and $\widehat{\AW{\lambda}}(t_{n})$ for $n \in \mathbb{N}_0$ be the discrete solutions to our model as defined in~\eqref{eq:diskretisierung_S}, \eqref{eq:diskretisierung_phi},~\eqref{eq:diskretisierung_sigma} and~\eqref{eq:diskret_sum}.

Then the following statements hold:
\begin{enumerate}
    \item The sequence $(\widehat{S}(t_n))_{n \in \mathbb{N}_0}$ is nonincreasing and it holds
        \begin{align*}
            \lim_{n \to \infty} \widehat{S}(t_n) = \widehat{S}_{\infty}(\Delta t).
        \end{align*}
    \item The sequences $(\widehat{\sigma}_{z_1}^{z_2}(t_n))_{n \in \mathbb{N}_0}$ are bounded and it holds
        \begin{align*}
            \lim_{n \to \infty} \widehat{\sigma}_{z_1}^{z_2}(t_n) = 0.
        \end{align*}
    \item The sequence $(\widehat{\AW{\lambda}}(t_n))_{n \in \mathbb{N}_0}$ is bounded and it holds
        \begin{align*}
            \lim_{n \to \infty} \widehat{\AW{\lambda}}(t_n) = 0.
        \end{align*}
    \item The sequences $(\widehat{R}(t_n))_{n \in \mathbb{N}_0}$ and $(\widehat{D}(t_n))_{n \in \mathbb{N}_0}$ are nondecreasing and it holds
        \begin{align*}
            \lim_{n \to \infty} \widehat{R}(t_n) = \widehat{R}_\infty(\Delta t)\quad\text{and}\quad\lim_{n \to \infty} \widehat{D}(t_n) = \widehat{D}_\infty(\Delta t).
        \end{align*}
\end{enumerate}

\end{theorem}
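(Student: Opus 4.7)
The plan is to cascade convergence information through the coupled system, beginning with $\widehat{S}$, propagating through the flows via the convolution structure in~\eqref{eq:diskretisierung_sigma} and the summability provided by~\cref{lem:bounded_series}, and closing with $\widehat{R}$, $\widehat{D}$, and $\widehat{\lambda}$. Statement~(1) is essentially immediate: the iteration~\eqref{eq:diskretisierung_S} together with $\widehat{\lambda}(t_n)\geq 0$ from~\cref{thm:positivity_boundedness} yields $\widehat{S}(t_{n+1})\leq \widehat{S}(t_n)$; the lower bound $\widehat{S}(t_n)\geq 0$ and the monotone convergence theorem then give $\widehat{S}(t_n)\to \widehat{S}_\infty(\Delta t)$.

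For statement~(2), the starting point is the identity $\widehat{\sigma}_S^E(t_{n+1}) = (\widehat{S}(t_n) - \widehat{S}(t_{n+1}))/\Delta t$, which comes from combining~\eqref{eq:sigmaSE_discrete} with the backwards difference~\eqref{eq:S_backwardsdifference}; this immediately delivers boundedness and $\widehat{\sigma}_S^E(t_n)\to 0$. I would then cascade through the remaining flows by invoking the elementary convolution fact $c_0 \ast \ell^1 \subseteq c_0$: if $(a_i)$ is a bounded nonnegative sequence with $a_i\to 0$ and $(g_j)_{j\geq 1}$ is nonnegative and summable, then $\sum_i g_{n+1-i}a_i$ is bounded and tends to zero. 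This follows from splitting the sum at $i=n-J$ with $J$ chosen so the $g$-tail is below $\varepsilon$ and $n$ large enough that $a_i<\varepsilon$ on the remainder. \cref{lem:bounded_series} provides precisely the required summability for the weights $-\Delta t\,{\widehat{\gamma}{}_{z_1}^{z_2}}'(t_j)$, and the assumed boundedness of the prehistory flows $\widehat{\sigma}_{z_1}^{z_2}(t_i)$ for $i\in\{a,\dots,0\}$ absorbs those initial terms. Iterating along the chain of flows in~\eqref{eq:diskretisierung_sigma} propagates boundedness and the zero limit to every transition $\widehat{\sigma}_{z_1}^{z_2}(t_n)$.

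Statement~(3) follows from the same convolution lemma applied to~\eqref{eq:diskretisierung_phi}, now using the hypotheses~\eqref{eq:assumption_xiC}--\eqref{eq:assumption_xiI} as the $\ell^1$ property of the weights that bundle $\xi_C,\xi_I,\rho_C,\rho_I$, and the $\gamma$-factors; statement~(2) supplies $\widehat{\sigma}_E^C(t_n),\widehat{\sigma}_C^I(t_n)\to 0$ as the sequences to be convolved. The prefactor $\phi(t_{n+1})/(N-\widehat{D}(t_n))$ is uniformly bounded since $\phi$ is bounded by assumption and~\cref{lem:not_all_individuals_die} guarantees $N-\widehat{D}(t_n) \geq N - \widehat{D}_\infty(\Delta t) > 0$. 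Statement~(4) is cleanest: the update relations~\eqref{eq:diskret_update} together with the nonnegativity of all flows from~\cref{thm:positivity_boundedness} show that $\widehat{R}$ and $\widehat{D}$ are nondecreasing, while mass conservation (\cref{lem:massenerhaltung}) caps each by $N$, so monotone convergence yields $\widehat{R}_\infty(\Delta t)$ and $\widehat{D}_\infty(\Delta t)$. To avoid any circular appearance, I would order the write-up so that the monotonicity and boundedness half of statement~(4) is settled immediately after statement~(2), which then makes the denominator bound in statement~(3) unambiguous.

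The main technical obstacle is the convolution lemma in statement~(2): namely the careful bookkeeping of the prehistory indices $i\in\{a,\dots,0\}$, whose contribution must be absorbed by the $g$-tail estimate, and the inductive propagation of the \emph{bounded and tends to zero} property through the cascade of eight transitions, where at each step one must identify which preceding quantity plays the role of $(a_i)$ and which of $(g_j)$. Once this lemma is established cleanly, the remaining pieces are essentially bookkeeping with the already-available Lemmas~\ref{lem:massenerhaltung}--\ref{lem:bounded_series} and~\cref{thm:positivity_boundedness}.
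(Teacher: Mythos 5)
Your proposal is correct and follows essentially the same route as the paper: monotone convergence for $\widehat{S}$, the telescoping identity for $\widehat{\sigma}_S^E$, an $\varepsilon/2$-splitting of the discrete convolution (your $c_0 \ast \ell^1 \subseteq c_0$ lemma, which the paper carries out explicitly for $\widehat{\sigma}_E^C$ using~\cref{lem:bounded_series} and then applies analogously to the remaining flows and to $\widehat{\lambda}$ via~\eqref{eq:assumption_xiC}--\eqref{eq:assumption_xiI}), and monotone convergence for $\widehat{R}$ and $\widehat{D}$. The only difference is presentational — you abstract the splitting argument into a reusable lemma, whereas the paper inlines it once — and your handling of the denominator bound via the limit statement of~\cref{lem:not_all_individuals_die} matches the paper's.
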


\begin{proof}
    \begin{enumerate}
        \item\label{enum:proof1} Similar to~\cite{messina_non-standard_2022}, this property follows from~\cref{thm:positivity_boundedness}: 
            \begin{align*}
                1+ \Delta t \,\widehat{\AW{\lambda}}(t_n) \geq 1 \quad \Rightarrow \quad \widehat{S}(t_{n+1}) = \frac{\widehat{S}(t_n)}{1+ \Delta t\, \widehat{\AW{\lambda}}(t_n)} \leq \widehat{S}(t_n).
            \end{align*}
            The sequence $(\widehat{S}(t_n))_{n \in \mathbb{N}_0}$ is monotonously decreasing and bounded by zero from below, see~\eqref{eq:S_nonnegative}. Hence, the sequence is convergent and there exists $\widehat{S}_\infty(\Delta t)\geq0$ such that 
            \begin{align*}
                \lim_{n\to\infty} \widehat{S}(t_n) = \widehat{S}_\infty(\Delta t).
            \end{align*}

        \item\label{enum:proof2} With~\eqref{eq:sigmaSE_discrete} and~\eqref{eq:S_backwardsdifference}, it holds
            \begin{align*}
                \widehat{\sigma}_S^E(t_{n+1}) =  \frac{\widehat{S}(t_n) - \widehat{S}(t_{n+1})}{\Delta t}.
            \end{align*}
            For $n \to \infty$ it follows that       
            \begin{align}\label{eq:sigmaSE_converges_to_0}
                \lim_{n \to\infty} \widehat{\sigma}_S^E(t_{n+1}) = \lim_{n \to\infty} \frac{\widehat{S}(t_n) - \widehat{S}(t_{n+1})}{\Delta t}=0,
            \end{align}
            since $\Delta t$ is fixed and the sequence $(\widehat{S}(t_n))_{n \in \mathbb{N}_0}$ is convergent; see~item~\ref{enum:proof1} above.
            Since $(\widehat{\sigma}_S^E(t_n))_{n \in \mathbb{N}_0}$ converges, the sequence is also bounded.

            We continue by showing that
            \begin{align*}
                \widehat{\sigma}_E^C(t_{n+1}) = -\Delta t \sum_{i=a}^n {\widehat{\gamma}{}_{E}^{C}}'(t_{n+1-i})\ \widehat{\sigma}_S^E(t_{i+1})
            \end{align*}
            converges to zero as well for $n \to \infty$.
            To simplify the notation, we define
            \begin{align*}
                x_n :=\sum_{i=a}^n -{\widehat{\gamma}{}_{E}^{C}}'(t_{n+1-i}) \ \widehat{\sigma}_S^E(t_{i+1}) = \sum_{i=0}^{n-a} -{\widehat{\gamma}{}_{E}^{C}}'(t_{i+1}) \ \widehat{\sigma}_S^E(t_{n+1-i}).
            \end{align*}
            Let us recall here that $a<0$ and $-{\widehat{\gamma}{}_{E}^{C}}'(t_{i}) \geq 0$ for all $ i \in \mathbb{N}$ since $\gamma_E^C$ is monotonously decreasing. From~\cref{thm:positivity_boundedness}, we already know that $\widehat{\sigma}_S^E(t_{i})\geq 0$ for all $ i \in \mathbb{N}$. In addition, by~\cref{lem:bounded_series}, there exists $K\in\mathbb{R}$ such that
                    \begin{align}
                        \sum_{i=0}^{\infty} -{\widehat{\gamma}{}_{E}^{C}}'(t_{i+1}) \leq K < \infty. \label{eq:sigmaconv_assumption_gamma}
                    \end{align} 
            Since $\widehat{\sigma}_S^E(t_i)$ is bounded and nonnegative, it exists $0<L<\infty$ such that
            \begin{align}
                \widehat{\sigma}_S^E(t_{i}) < L \text{ for all } i\geq a.\label{eq:sigmaconv_conditionsigma}
            \end{align}
            Now, let $\varepsilon>0$ be arbitrary but fixed. We choose $N(\varepsilon)>0$ such that for all $Q > N(\varepsilon) + a$ it holds that
            \begin{align}
                \sum_{i=N(\varepsilon)}^{Q-a} - {\widehat{\gamma}{}_{E}^{C}}'(t_{i+1})  < \frac{\varepsilon}{2L} \label{eq:sigmaconv_proof1}.
            \end{align}
            Such an $N(\varepsilon)$ exists due to~\eqref{eq:sigmaconv_assumption_gamma}.
            
           We consider $x_Q$ for such a $Q > N(\varepsilon) + a$,
            \begin{align*}
                x_{Q}
                &= \sum_{i=0}^{Q-a} -{\widehat{\gamma}{}_{E}^{C}}'(t_{i+1}) \ \widehat{\sigma}_S^E(t_{Q+1-i}) \\ 
                &= \underbrace{\sum_{i=0}^{N(\varepsilon)-1} -{\widehat{\gamma}{}_{E}^{C}}'(t_{i+1}) \ \widehat{\sigma}_S^E(t_{Q+1-i})}_{=:\,S_1} + \underbrace{\sum_{i=N(\varepsilon)}^{Q-a} -{\widehat{\gamma}{}_{E}^{C}}'(t_{i+1}) \ \widehat{\sigma}_S^E(t_{Q+1-i})}_{=:\,S_2}.
            \end{align*}

            We will estimate the summands $S_1$ and $S_2$ of $x_Q$ separately. 
            
            For $S_2$, we observe that for any $Q>N(\varepsilon)+a$ it holds that
            \begin{align}
                S_2=\sum_{i=N(\varepsilon)}^{Q-a} -{\widehat{\gamma}{}_{E}^{C}}'(t_{i+1})\ \widehat{\sigma}_S^E(t_{Q+1-i}) < L\,    \sum_{i=N(\varepsilon)}^{Q-a} -{\widehat{\gamma}{}_{E}^{C}}'(t_{i+1}) < \frac{\varepsilon}{2} \label{eq:sigmaconv_zweiterteilSumme}
            \end{align}
            where we are using~\eqref{eq:sigmaconv_conditionsigma} and~\eqref{eq:sigmaconv_proof1}.

            Now we consider the summand
            \begin{align}
               S_1= \sum_{i=0}^{N(\varepsilon)-1} -{\widehat{\gamma}{}_{E}^{C}}'(t_{i+1})\ \widehat{\sigma}_S^E(t_{Q+1-i}). \label{eq:sigmaconv_firstpartofxQ}
            \end{align}Note that it holds $Q+1-i \in \{Q+2-N(\varepsilon), \dots, Q+1 \}$ for $i \in \{0, \dots , N(\varepsilon)-1\}$.

           We choose $Q_1(\varepsilon)$ such that for all $q > Q_1(\varepsilon) $ the estimate
            \begin{align}
                \widehat{\sigma}_S^E(t_q) < \frac{\varepsilon}{2K} \label{eq:sigmaconv_proof2}
            \end{align}
            is satisfied. There exists such a $Q_1(\varepsilon)$ since $\widehat{\sigma}_S^E$ converges to zero; see~\eqref{eq:sigmaSE_converges_to_0}.
            We set
            \begin{align}
                Q(\varepsilon) := Q_1(\varepsilon) + N(\varepsilon). \label{eq:sigmaconv_Qeps}
            \end{align}
            For all $Q>Q(\varepsilon)$, it holds that
            \begin{align*}
                Q+2-N(\varepsilon) > Q_1(\varepsilon)+N(\varepsilon) +2-N(\varepsilon) > Q_1(\varepsilon). 
            \end{align*}
            This implies
            \begin{align}
                \widehat{\sigma}_S^E(t_{Q+1-i}) < \frac{\varepsilon}{2K}, \label{eq:sigmaconv_condition_sigma_final}
            \end{align}
            for $i \in \{0, \dots , N(\varepsilon)-1\}$, compare~\eqref{eq:sigmaconv_firstpartofxQ} and~\eqref{eq:sigmaconv_proof2}.

            Finally, we obtain
            \begin{align}
                S_1 = \sum_{i=0}^{N(\varepsilon)-1} -{\widehat{\gamma}{}_{E}^{C}}'(t_{i+1})\,\underbrace{\vphantom{\displaystyle\sum_{i=1}^N}\widehat{\sigma}_S^E(t_{Q+1-i})}_{<\frac{\varepsilon}{2K}} \leq  \frac{\varepsilon}{2K}\, \underbrace{\sum_{i=0}^{N(\varepsilon)-1} -{\widehat{\gamma}{}_{E}^{C}}'(t_{i+1})}_{\leq K} \leq  \frac{\varepsilon}{2K} \, K =  \frac{\varepsilon}{2}, \label{eq:sigmaconv_ersterteilSumme}
            \end{align}
            where we first use~\eqref{eq:sigmaconv_condition_sigma_final} and afterward~\eqref{eq:sigmaconv_assumption_gamma}.

            For any $Q>Q(\varepsilon)$, it holds $Q>N(\varepsilon) +a$ since $a<0$ and~\eqref{eq:sigmaconv_Qeps}.
            Hence, for any $Q>Q(\varepsilon)$ we obtain
            \begin{align*}
                x_Q&= \sum_{i=0}^{Q-a} -{\widehat{\gamma}{}_{E}^{C}}'(t_{i+1}) \ \widehat{\sigma}_S^E(t_{Q+1-i})
                =S_1+S_2<\frac{\varepsilon}{2}+\frac{\varepsilon}{2}=\varepsilon
            \end{align*}
           by using~\eqref{eq:sigmaconv_zweiterteilSumme} and~\eqref{eq:sigmaconv_ersterteilSumme}.

            Altogether we have shown that for any $\varepsilon>0$ there exists a $Q(\varepsilon)$ with $ x_Q < \varepsilon$ for all $Q > Q(\varepsilon)$, i.e., it holds that $x_Q \to 0$ for $Q\to \infty$.

           It follows that
           \begin{align}
               \lim_{n\rightarrow\infty} \widehat{\sigma}_E^C(t_{n+1})=\Delta t\,x_n=0
           \end{align}
           and that the sequence $(\widehat{\sigma}_E^C(t_\AW{n}))_{n \in \mathbb{N}_0}$ is also bounded. 

            Analogously, it can be proven that all other transitions are bounded and converge to zero.

        \item\label{enum:proof3} The statement on $\widehat{\AW{\lambda}}$ can be proven analogously to the previous item~\ref{enum:proof2}. To do this, we make the following observations.
        \begin{itemize}
            \item With the assumptions~\eqref{eq:assumption_xiC} and~\eqref{eq:assumption_xiI}, we obtain the analogous statement to~\eqref{eq:sigmaconv_assumption_gamma}.
            \item From item~\ref{enum:proof2}, above, we know that 
            \begin{align*}
                \lim_{i\to\infty} \widehat{\sigma}_E^C(t_i)=0,
            \end{align*}
            \begin{align*}
                \lim_{i\to\infty} \widehat{\sigma}_C^I(t_i)=0
            \end{align*}
            and that these sequences are bounded.
        \end{itemize}
         With~\cref{lem:not_all_individuals_die} we can deduce that \AW{$\lim_{i\to \infty} (N- \widehat{D}(t_{i}))^{-1}$} is bounded. By assumption, the parameter $\phi(t)$ \AW{is} bounded. 
        Hence, we can apply the proof structure from before to the force of infection term $\widehat{\AW{\lambda}}$ to obtain the statement.       
        \item\label{enum:proof4} Since all transitions are nonnegative, the discretizations for the compartments $R$ and $D$ in~\eqref{eq:diskret_update} indicate that the sequences $(\widehat{R}(t_n))_{n \in \mathbb{N}_0}$ and $(\widehat{D}(t_n))_{n \in \mathbb{N}_0}$ are non\-de\-crea\-sing. By~\cref{thm:positivity_boundedness}, we already know that both sequences are bounded from above, which implies convergence. 
    \end{enumerate}
\end{proof}

\section{Numerical results}\label{sec:num}

In this section, we will first demonstrate the numerical convergence order of our nonstandard discretization scheme. Then, we show how \AW{a} standard ODE and our newly introduced IDE model behave at change points and for a COVID-19 inspired scenario. The presented IDE model and numerical experiments were implemented \AW{in C++} as a part of our high performance modular epidemics simulation software MEmilio~\cite{memilio121}.

\subsection{Order of convergence}
We start with examining the convergence order of the discretization method that we derived in~\cref{sec:discretization}. 

In order to determine the order of convergence, we can use the property that the continuous IDE model reduces to an ODE model under a special choice of parameters, see also~\ref{app:ide_ode}. Our parameter selection can be found in~\ref{app:parameters_IDEODE}. We use the solution of the corresponding ODE model solved with high precision as ground truth. In particular, we solve the ODE model using a Runge-Kutta scheme \AW{of fifth order} with a fixed time step of $\Delta t = 10^{-6}$ for $t \in [0, 70]$ and denote the solution (either compartment or flow size) ${u}^*_{\text{ODE}}$. The IDE model is solved with different time steps $\Delta t$ and the results are compared with those of the ODE model to calculate the convergence order of the numerical scheme.
To initialize the IDE model, we need transitions from the past. For this, we compute the respective transitions for $t \in [0,35]$ based on the results from the ODE simulation and use these for the initialization of the IDE model to make both models comparable. The simulation results of the IDE models and the ODE model are compared on the time interval starting at $t=35$ until $t_{\text{max}} = 70$. 
To compare the results, we calculate the relative error in $\|\cdot\|_2$-norm, i.e.,
\begin{align*}
   err_{\text{rel}}:= \frac{\left\lVert \widehat{u}_{\text{IDE}}-{u}^*_{\text{ODE}}\right\rVert_2}{\left\lVert{u}^*_{\text{ODE}}\right\rVert_{2}}
\end{align*} for each flow $\widehat{u}_{\text{IDE}}\in\{\widehat{\sigma}_{z_1}^{z_2} \text{ for appropriate }z_1,z_2\in\mathcal{Z}\}$ and for each compartment $\widehat {u}_{\text{IDE}}\in\widehat{\mathcal{Z}}$ obtained with the IDE model and where ${u}^*_{\text{ODE}}$ represents the corresponding solution of the ODE model. Here, we use a discrete $L^2$-norm that is defined by
\begin{align*}
    \left\lVert {u}\right\rVert_2=\left(\Delta t \sum_{i}u^2(t_i)\right)^{\frac{1}{2}}
\end{align*} and we evaluate the simulation results at the time points $t_i\in[35,70]$.
The results are depicted in~\cref{fig:convergence_l2}. We observe linear convergence for all transitions and all compartments. This is consistent with the results of
Messina et al.~\cite{messina_non-standard_2022} who derived a convergence order of one for compartment $S$ and their nonstandard numerical scheme.

\begin{figure}[H]
\begin{subfigure}{.5\textwidth}
    \centering
    \includegraphics[scale=0.45]{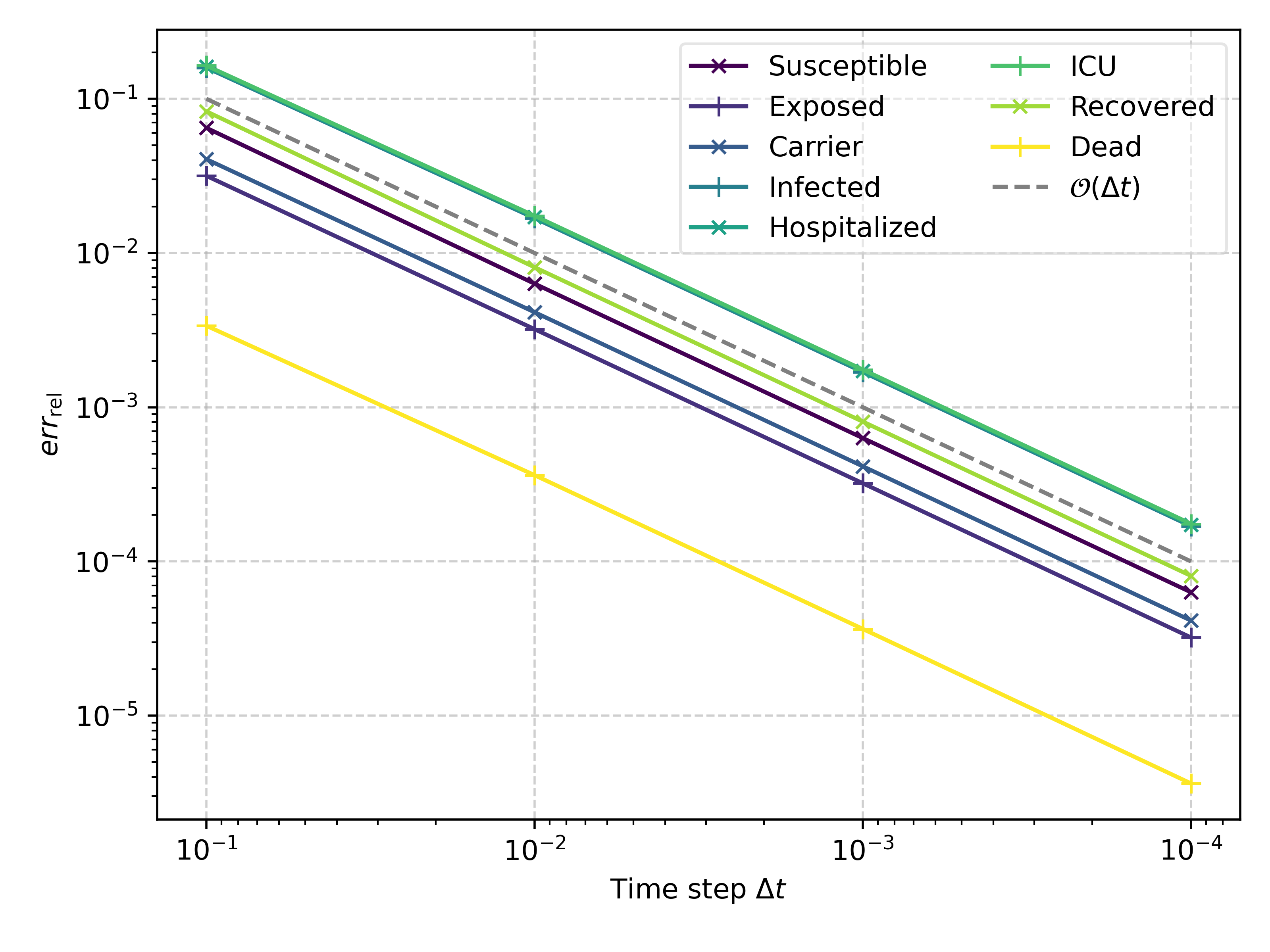}
\end{subfigure}
\begin{subfigure}{.5\textwidth}
    \centering
    \includegraphics[scale=0.45]{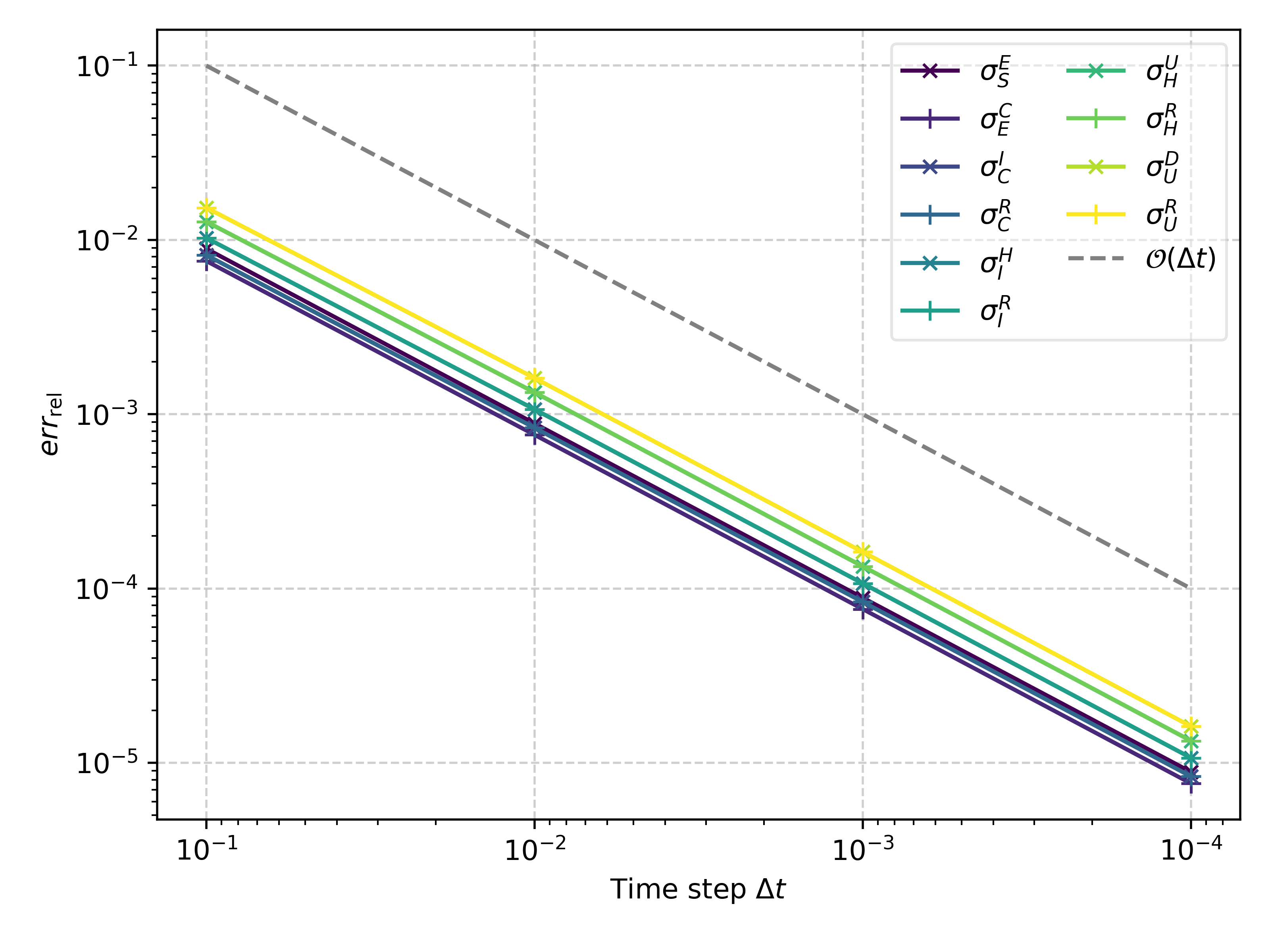}
\end{subfigure}
\caption{\textbf{Convergence of IDE model for compartments (left) and transitions (right).} Relative error of the simulation results of the IDE model with different time step sizes compared with numerical results of the ODE model with a step size $\Delta t=10^{-6}$ (ground truth) for the compartments (left) and the transitions (right). A linear function is plotted to compare the slope of the errors. }\label{fig:convergence_l2}
\end{figure}

\subsection{Model behavior at change points}\label{sec:changepoints}
This section presents a comparison of the behavior at change points of the IDE and ODE models. Change points may occur in response to the implementation or lifting of nonpharmaceutical interventions.
To analyze these dynamics, we use epidemiological parameters that are realistic for SARS-CoV-2. The parameters for the ODE and the IDE model are described in~\ref{app:parameters_SARS}, including the stay time distributions used for the IDE model. The parameters are inferred using the results of~\cite{kerr_covasim_2021} and~\cite{kuhn_assessment_2021}. Both models are initialized such that the number of new transmissions is constant at the beginning of the simulation. The contact rate is set accordingly. To model the implementation or lifting of nonpharmaceutical interventions, the contact rate is halved or doubled, respectively, in both models after two days. The simulation results for a simulation period of $12$ days are shown in~\cref{fig:changepoints} in form of the daily new transmissions. The IDE model is solved with step size $\Delta t= 10^{-2}$ and the ODE model with a Runge-Kutta scheme with the same fixed step size.

Firstly, we note that, as expected, the change in the contact rate is immediately observable in the new transmissions and a doubling/halving of the daily new transmissions occurs in both simulation results.
Afterward, we note that the ODE model responds more quickly to changes in the contact rate, while the predicted new transmissions in the IDE model remain constant for a period of time. This lag time observed for the IDE model is a realistic phenomenon. For example, Dey et al.~\cite{dey_lag_2021} or Guglielmi et al.~\cite{guglielmi_identification_2023} found a nontrivial lag time between the implementation of a nonpharmaceutical intervention and the change in the case data. In contrast to the ODE model, the IDE model naturally incorporates this time delay.
After the delay, the slope appears to be steeper in the IDE than in the ODE model. In summary, it can be stated that the different assumptions regarding the stay time distributions result in notable differences in the behavior at change points.

\begin{figure}[ht]
\begin{subfigure}{.5\textwidth}
    \centering
    \includegraphics[scale=0.45]{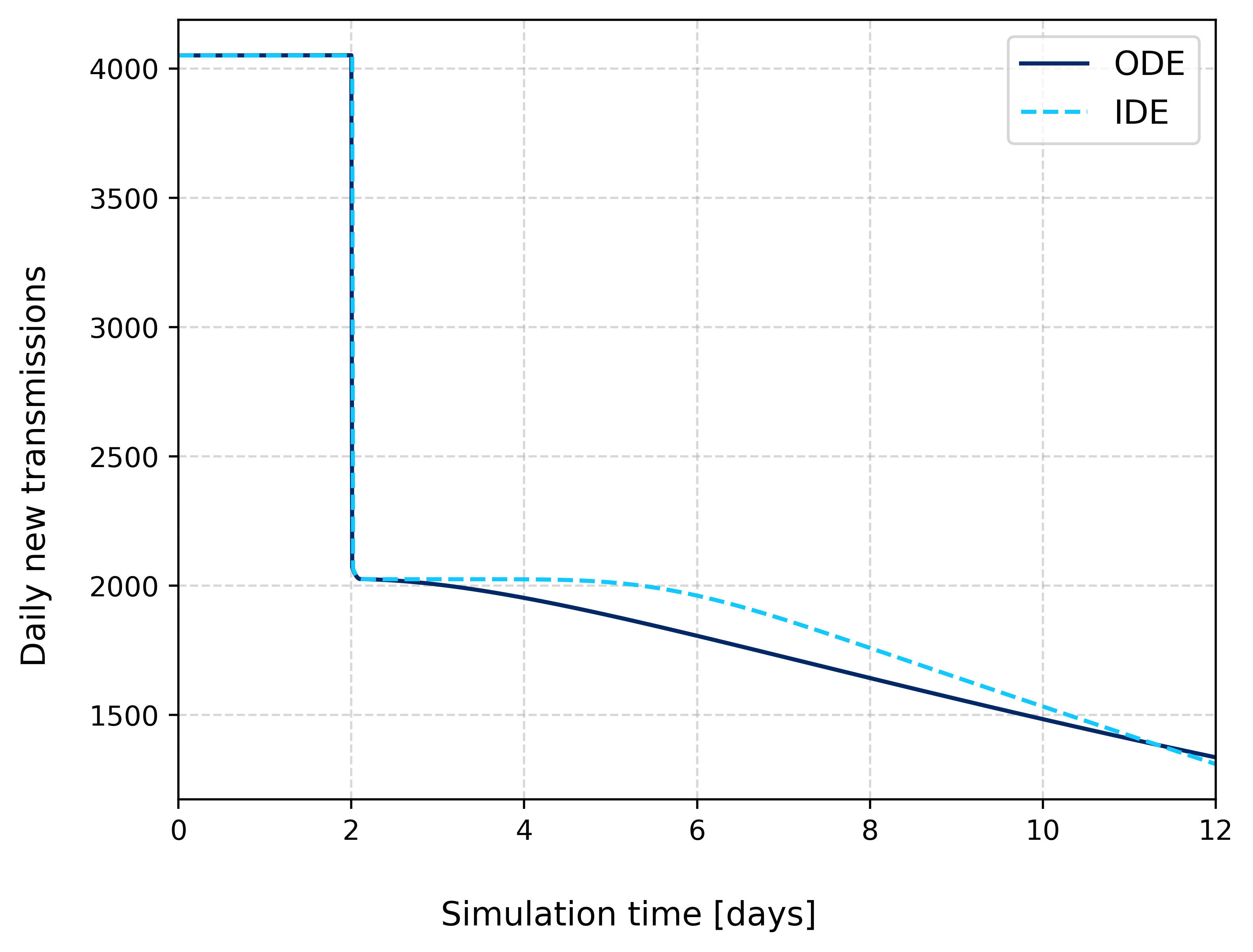}
\end{subfigure}
\begin{subfigure}{.5\textwidth}
    \centering
    \includegraphics[scale=0.45]{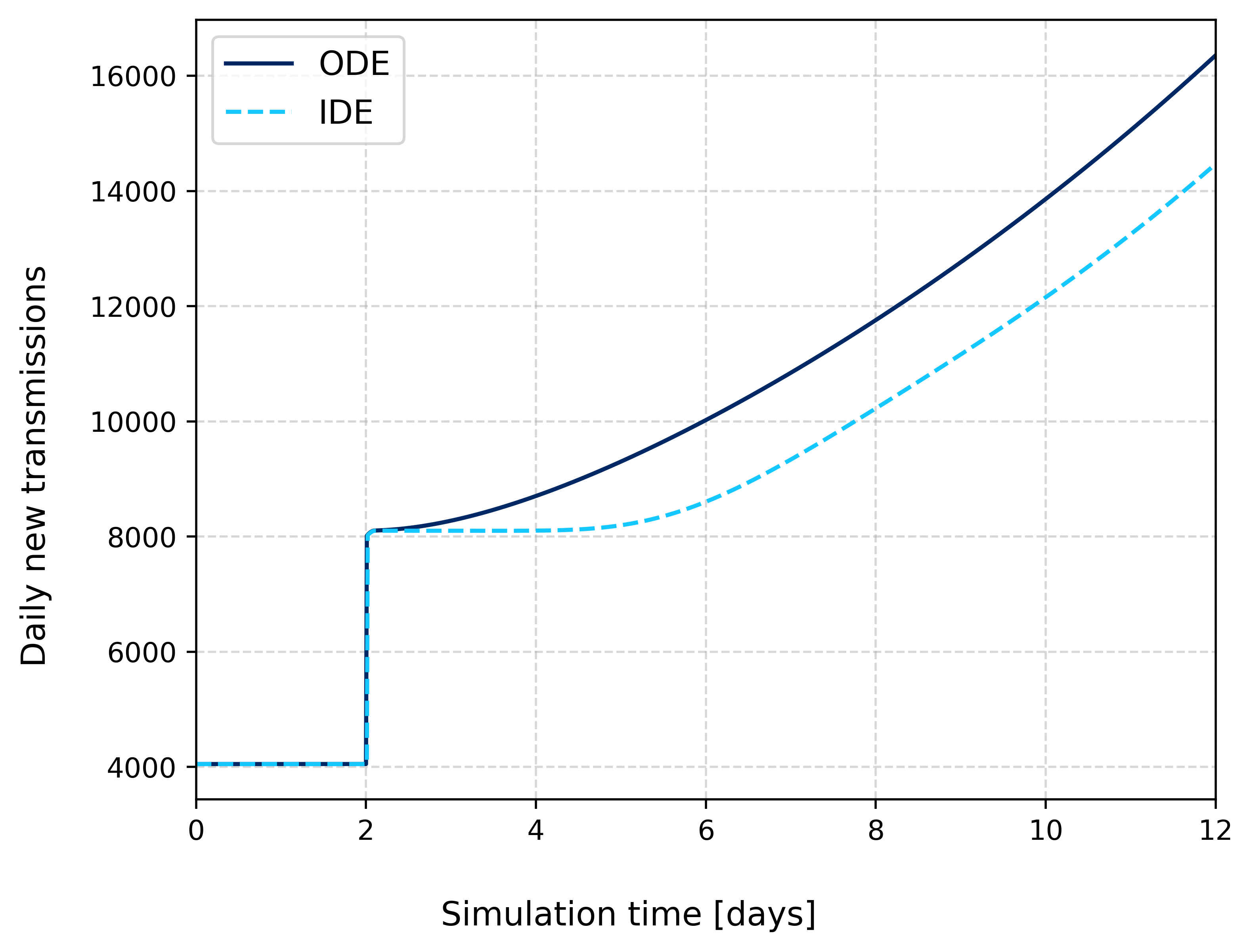}
\end{subfigure}
\caption{\textbf{Daily new transmissions at change points.} Comparison of the simulation results for the daily new transmissions
of the ODE and the IDE model for a halving (left) or doubling (right) of the contact rate $\phi(t)$ after two simulation days.}
\label{fig:changepoints}
\end{figure}

\subsection{A COVID-19 inspired real scenario}\label{sec:covid_scenario}

In this section, we demonstrate that the newly introduced IDE model can be used to predict realistic infection dynamics. To do this, we are looking at the spread of the COVID-19 disease in Germany in October $2020$. The simulation results of the IDE model are compared with interpolated reported data as well as with simulation results obtained using an appropriate ODE model. The reported data is processed in the following way. For simplicity, we assume in the explanation below that $\Delta t =1$, i.e., we assume that the considered time points correspond to days. In the implementation, a smaller step size is used and the data is interpolated correspondingly.

As in~\cref{sec:initialization}, we assume that the reported cases correspond to the mildly symptomatic individuals. In~\cite{RKI_data_2023}, confirmed cases are reported cumulatively. We denote them by $\Sigma_{\text{I,RKI}}$. We can infer $\widetilde{\sigma}_C^I$ from the reported data by
\begin{align*}
    \widetilde{\sigma}_C^I(t_n) = \Sigma_{\text{I,RKI}}(t_n) - \Sigma_{\text{I,RKI}}(t_{n}-1).
\end{align*}
With this we can initialize the flows of the IDE model as described in~\cref{sec:initialization}.

Now, we explain how we use the reported data to obtain the number of new transmissions, mildly symptomatic individuals as well as deaths. We will compare our simulation results to these values. 

The number of new transmissions at time $t_n$ is obtained by
\begin{align*}
    \sigma_{S,\text{rep}}^E(t_n) = \frac{1}{\mu_C^I}\left(\Sigma_{\text{I,RKI}}(t_n +  T_C^I + T_E^C) - \Sigma_{\text{I,RKI}}(t_n +  T_C^I + T_E^C -1)\right),
\end{align*}
where we assume that it takes individuals exactly $T_E^C+T_C^I$ days from getting infected to developing symptoms. The reported data is interpolated linearly in between two time steps if necessary.

To compute the number of individuals in compartment $I$, we assume that a share of $\mu_I^H$ individuals stays exactly $T_I^H$ days in compartment $I$ and the remaining share of $1-\mu_I^H$ stays exactly $T_I^R$ days in compartment $I$. This leads to
\begin{align*}
    I_{\text{rep}}(t_n) = \mu_I^H\left(\Sigma_{\text{I,RKI}}(t_n) - \Sigma_{I,\text{RKI}}(t_n- T_I^H)\right)+  \left(1-\mu_I^H\right)\left(\Sigma_{\text{I,RKI}}(t_n) - \Sigma_{I,\text{RKI}}(t_n- T_I^R)\right).
\end{align*}
Deaths are reported with the date when the infection is assumed to have taken place. To extrapolate the date of death, we shift the reported data (denoted by $D_{\text{RKI}}$) according to the mean stay times, i.e.,
\begin{align*}
    D_{\text{rep}}(t_n) = D_{\text{RKI}}(t_n - T_I^H - T_H^U - T_U^D),
\end{align*}
where $D_{\text{RKI}}(t)$ is the subset of deceased patients from $\Sigma_{\text{I,RKI}}(t)$. Once more, we interpolate linearly when required. The above extrapolation is also used to infer the number of deaths at the simulation start needed for the initialization of the IDE model, cf.~\cref{sec:initialization}.
To compare the number of patients in intensive care units to real data, we use reported intensive care patients directly~\cite{divi2024}.

The parameters described in~\ref{app:parameters_SARS} are again utilized. We use again a step size of $\Delta t= 10^{-2}$ for the IDE model and solve the ODE model with a fifth order Runge-Kutta scheme with constant step size $\Delta t$. 
The IDE model is initialized using \AW{reported} data as described in~\cref{sec:initialization}. The initial compartment sizes of the ODE model are set to the sizes calculated using the IDE model to make both models comparable. We choose Oct 1, 2020 as the start date and simulate for $45$ days, as this period was significant in the pandemic and the data situation on the number of cases comparatively good. As it is hard to measure the contact rate accurately in every time period, the contact rate $\phi(t)$ is set such that the daily new transmissions of the simulation results are consistent to the reported data at the beginning of the simulation.

We use the same contact rate for both model types and introduce a contact reduction at the time when a drop in the reported data can also be detected; for details see the appendix. 
This factor is determined on the basis of~\cite{kuhn_assessment_2021} and corresponds approximately to the strength of the nonpharmaceutical measures applied there for the fall.

The simulation results obtained for the described scenario are depicted in~\cref{fig:real_scenario}. The reported data are very well predicted by the IDE model regarding daily new transmissions, number of mildly infected individuals as well as ICU patients. The ODE model tends to underestimate the \AW{reported} data. Our proposed IDE model seems to be well suited for modeling realistic scenarios. Only the number of deaths appears to be underestimated by the IDE model as well. This could be explained through the fact that in our model, we only allowed \AW{individuals} to die in intensive care units, see also~\cref{fig:IDESECIR} or that, in fact, mild cases might have gone unseen and that \AW{matching the simulation results to} reported data underestimates upcoming deaths.

\begin{figure}[htb]
\begin{subfigure}{.5\textwidth}
    \centering
    \includegraphics[scale=0.4]{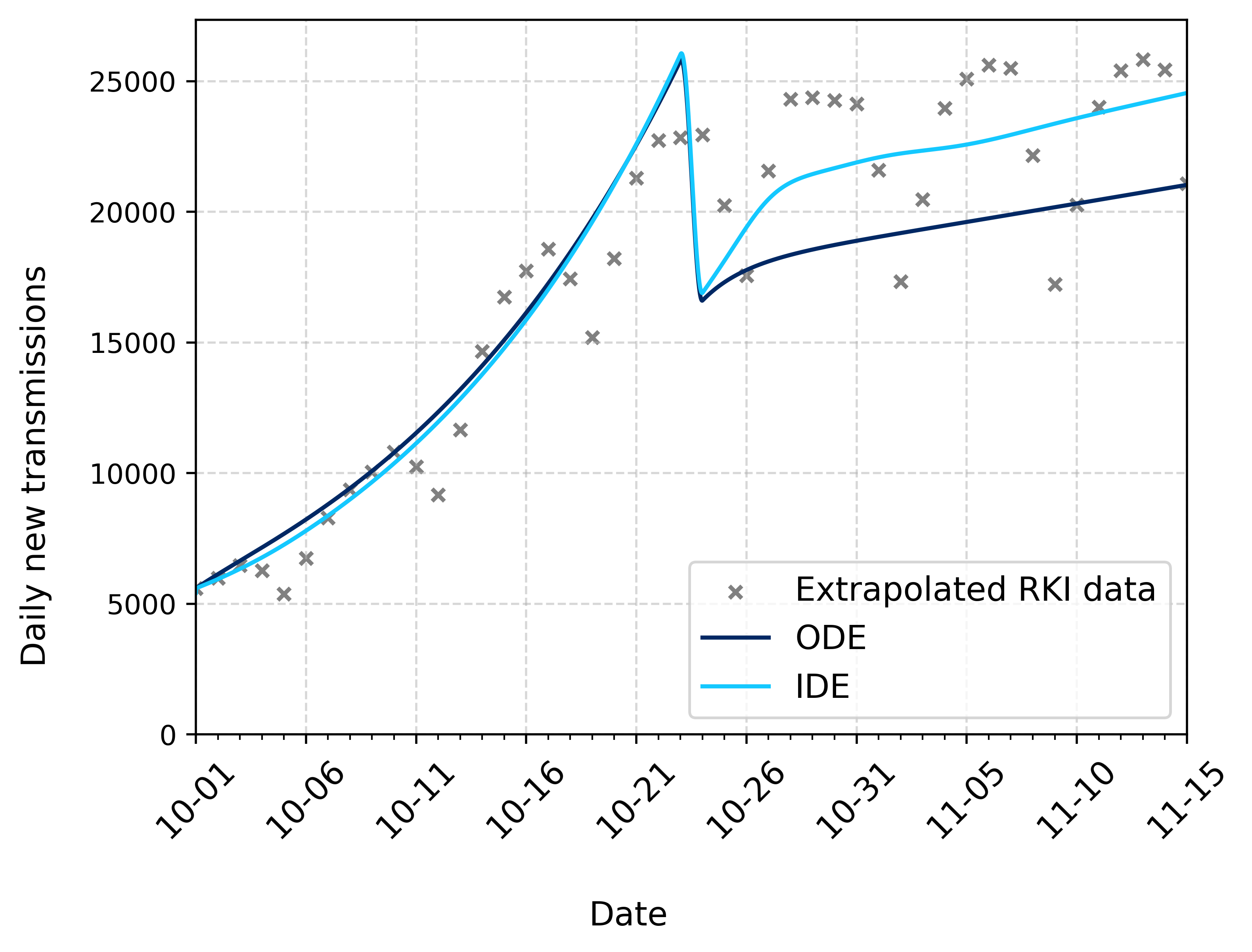}
\end{subfigure}
\begin{subfigure}{.5\textwidth}
    \centering
    \includegraphics[scale=0.4]{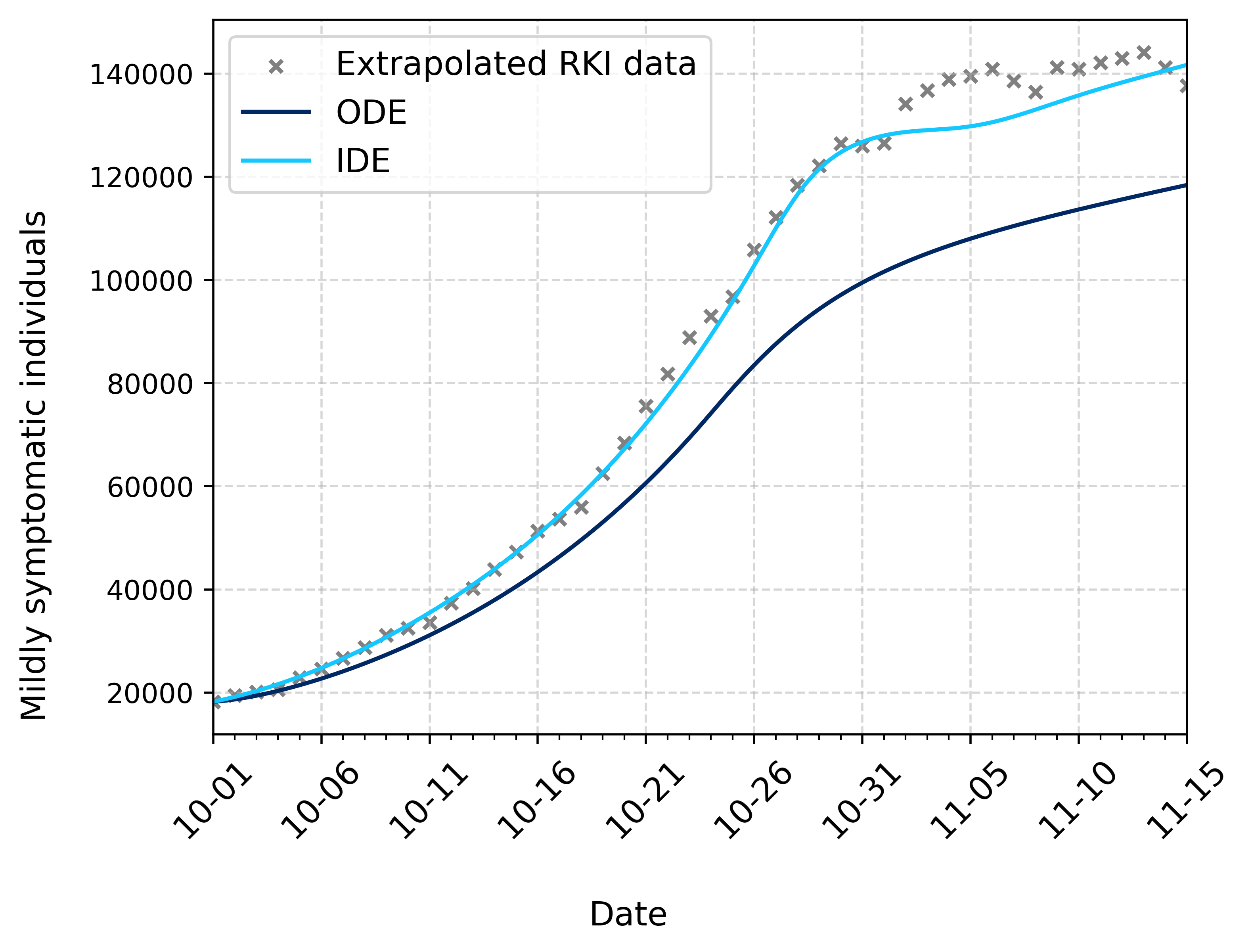}
\end{subfigure}
\begin{subfigure}{1.\textwidth}
    \begin{subfigure}{.5\textwidth}
        \centering
        \includegraphics[scale=0.4]{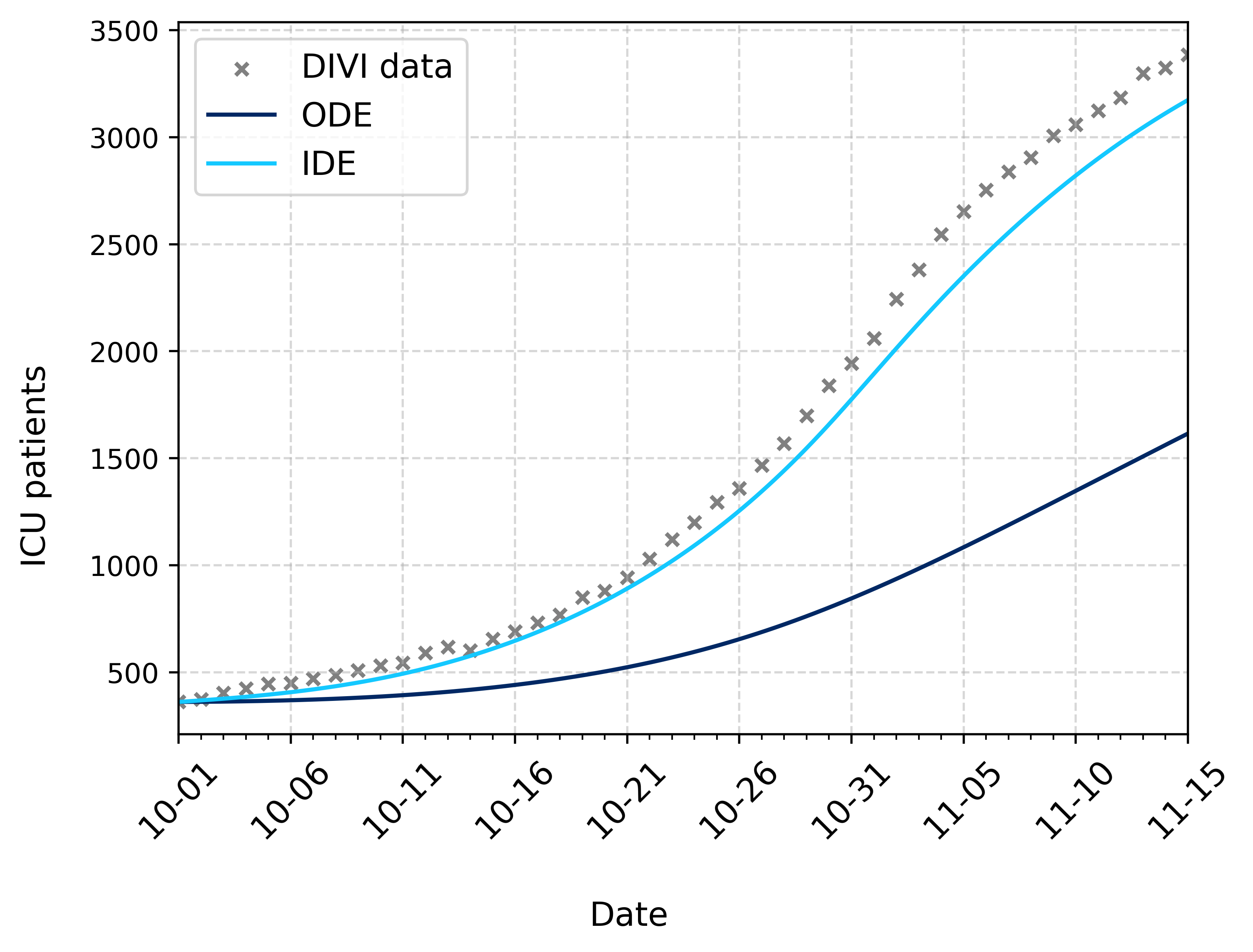}
    \end{subfigure}
    \begin{subfigure}{.5\textwidth}
        \centering
        \includegraphics[scale=0.4]{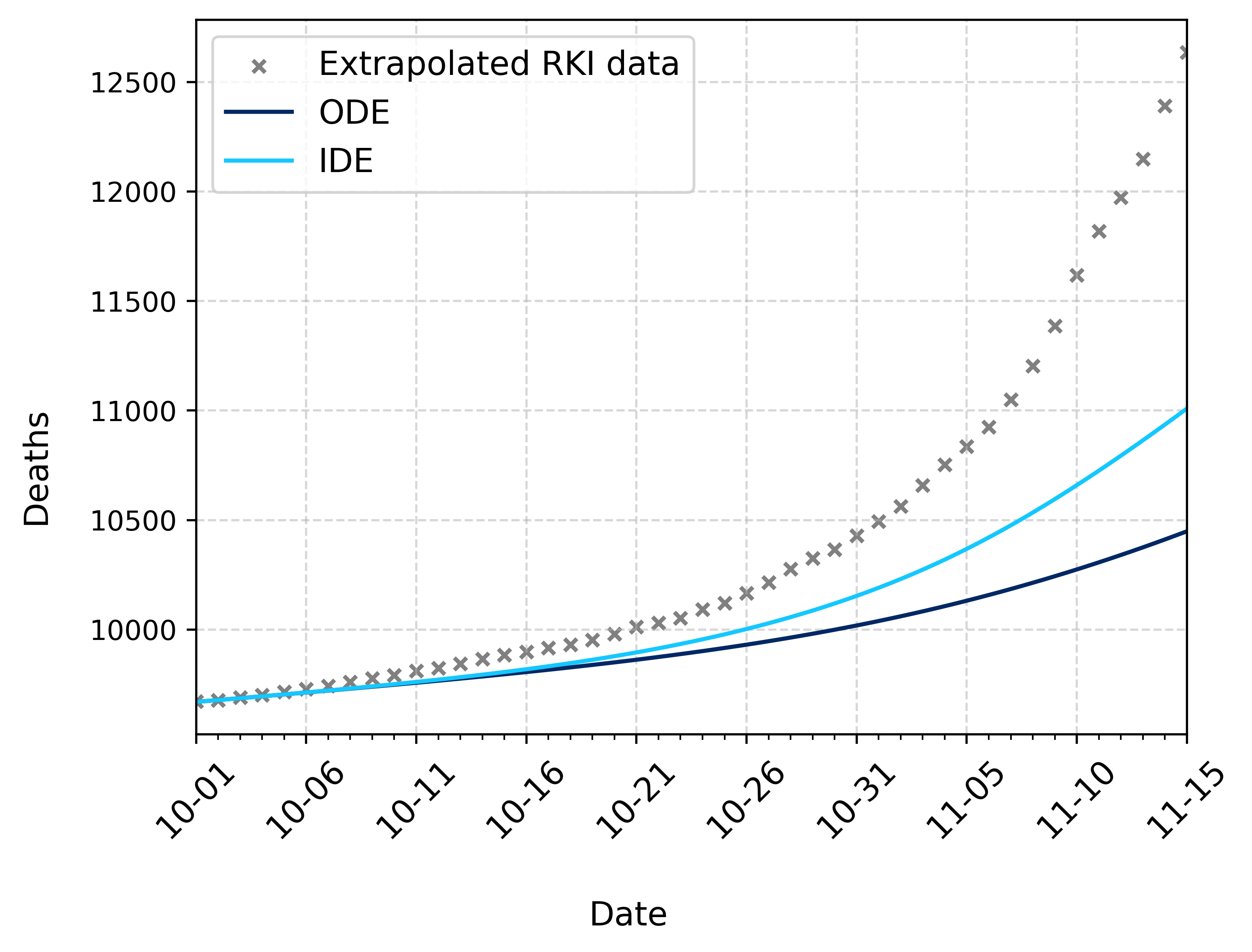}
    \end{subfigure}
\end{subfigure}
\caption{\textbf{Simulation results for COVID-19 in Germany from Oct 1, 2020, onwards.} Comparison of extrapolated real data with the simulation results of the IDE and the ODE model. The results are shown for the number of daily new transmissions (top left), the number of mildly symptomatic individuals (top right), the number of patients in intensive care units (bottom left) and the number of deaths (bottom right).}
\label{fig:real_scenario}
\end{figure}

\section{Discussion}\label{sec:discussion}

Models for infectious disease dynamics always use implicit or explicit assumptions on the dynamic process. As exponential stay times have been found to be unrealistic by different authors~\cite{wearing_appropriate_2005,donofrio_mixed_2004, faesTimeSymptomOnset2020, challenMetaanalysisSevereAcute2022, madewellSerialIntervalIncubation2023}, we proposed a model based on integro-differential equations that allows for modeling of more realistic stay time distributions. However, for many diseases the true stay time distributions are unknown or, at least in the beginning of a pandemic, difficult to get good estimations of. While this poses a heavy limitation on the advancement given here, in the worst case, our model reduces to the mostly used ODE model forms. However, in the COVID-19 scenario we found that the results based on data published rather early in the pandemic, led to good results of the IDE model without the need of extensive fitting. 
The deviations in deaths could be explained by a certain number of undetected infections, to which we have not fitted the model. Let us note that, by default, we would also expect our model to constantly overestimate reported ICU numbers as deaths in our model are only possible from ICU.

Furthermore, epidemiological parameters are highly dependent on age and mobility processes can become drivers of the spread, see e.g.~\cite{kuhn_assessment_2021}. 
In our simulations we averaged over age-dependent parameters by using the respective proportion of the age group compared to the total population to simulate with a model without age resolution. In~\cref{fig:proportions_per_agegroup}, one can see that the age distribution of the (confirmed) cases heavily depends on the time period under consideration. Hence, the appropriate average value had to be fitted differently for different simulation periods. To further enhance model accuracy, age groups need to be included into the model. Implementing spatial resolution allows for modeling the impact of local implementations of nonpharmaceutical interventions and mobility across counties.
These aspects will be the subject of further research. 

\begin{figure}
    \centering
    \includegraphics[scale=0.4]{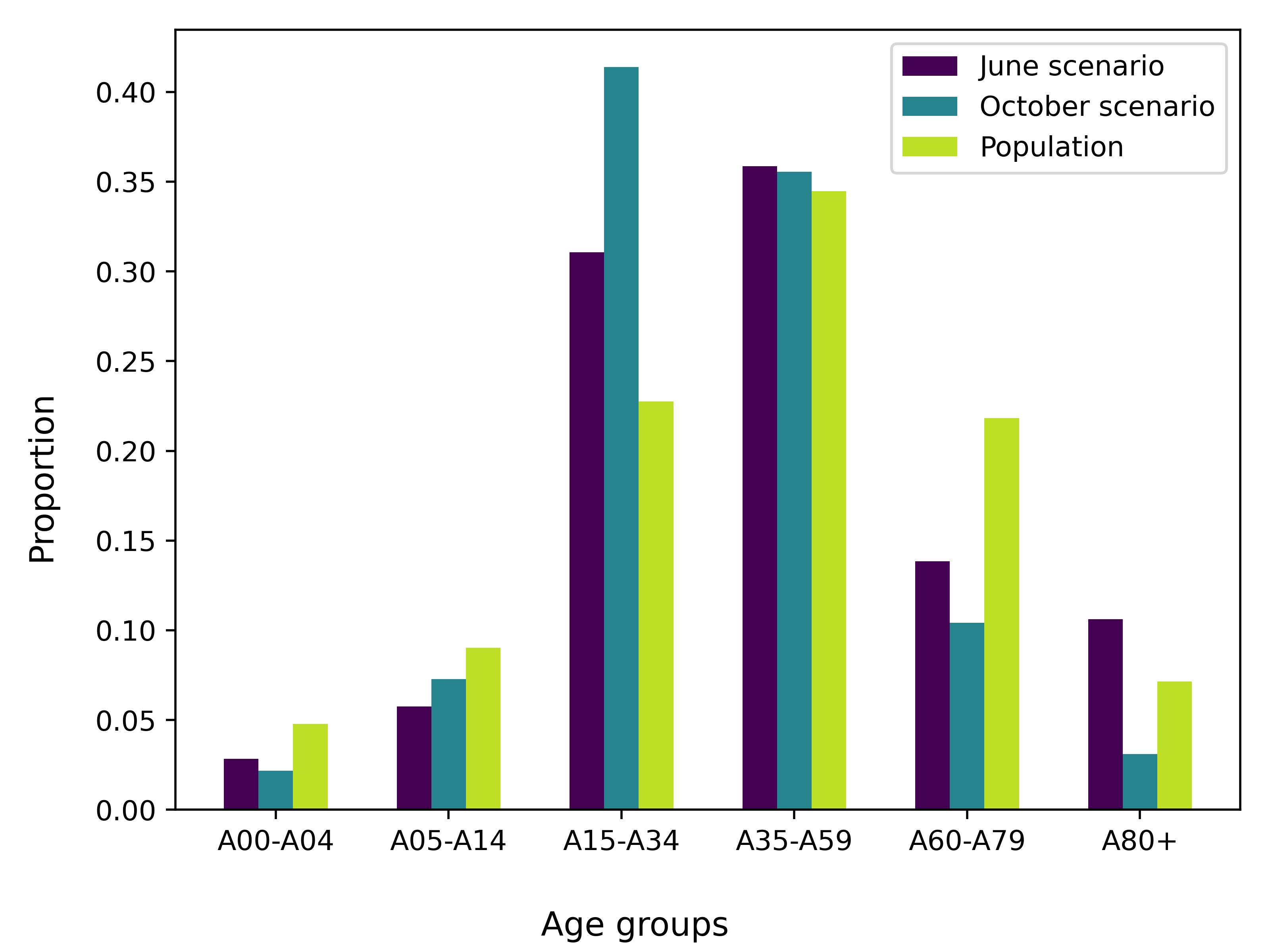}
\caption{\textbf{Share of considered population in respective age groups.} For dates Jun 1, 2020, and Oct 1, 2020, we consider the number of confirmed cases between $T_I^H + T_H^U + T_U$ and $T_I^H + T_H^U$ days before these dates. This corresponds to the individuals that we expect to be in compartment $U$ at time $t_0$. We use data on the confirmed cases of COVID-19 in Germany provided by~\cite{RKI_data_2023}. Above, the proportion per age group of these confirmed cases is depicted. For comparison, the proportion of each age group in the total population in Germany as reported in~\cite{regionaldatenbank_deutschland_fortschreibung} is shown.\label{fig:proportions_per_agegroup}}
\end{figure}

\section{Conclusion}\label{sec:conclusion}

In this paper, we introduced a detailed model based on integro-differential equations (IDE) that allows the consideration of arbitrary stay time distributions. The proposed model is a generalization of an ODE model, which is restricted to exponentially distributed stay times.
We further extended a nonstandard numerical scheme to solve the resulting equations of the IDE model. We provided theoretical results for the numerical solution scheme, proving that important biological properties regarding, e.g., positivity and boundedness, are preserved. Furthermore, we examined the behavior in the limit of the numerical solution and provided theoretical results. Numerically, we demonstrated a linear convergence rate for all considered solution elements -- which is consistent with the results of~\cite{messina_non-standard_2022} for the Susceptible compartment. 

To demonstrate the importance of choosing appropriate stay time distributions when modeling epidemic outbreaks, we compared our proposed IDE model with a corresponding ODE model. The model behavior is compared at change points and in a COVID-19 inspired real scenario. We observed significant differences in both cases. In contrast to the ODE model, the IDE model shows a realistic delay time after changing the contact rate. This confirms findings of the literature, where a delay between the implementations of nonpharmaceutical interventions and their effect on the case data was observed. 
In the COVID-19 scenario, we found that the results based on the IDE model lead to more accurate predictions than the ODE model. This shows the benefit of using realistic stay time distributions compared to restricting oneself to exponential stay time distributions. 

We are optimistic that our modeling approach as well as the nonstandard numerical solution scheme to preserve important mathematical-biological properties can easily be adapted to other infectious diseases such as, e.g., Influenza. 

\section*{Acknowledgements}
\noindent AW, LP, and MJK have received funding from the Initiative and Networking Fund of the Helmholtz Association (grant agreement number KA1-Co-08, Project LOKI-Pandemics). The authors LP and MJK have received funding by the German Federal Ministry of Education and Research under grant agreement 031L0297B (Project INSIDe). LP, HT, and MJK have received funding by the German Federal Ministry for Digital and Transport under grant agreement FKZ19F2211A (Project PANDEMOS).

\section*{Competing interests}
\noindent The authors declare to not have any competing interests.

\section*{Data availability}
\noindent The MEmilio repository is publicly available under \url{https://github.com/SciCompMod/memilio}. All model functionality is available with MEmilio v1.2.1~\url{https://zenodo.org/records/13341171}. \AW{ A detailed description on how to reproduce the simulation results presented in this paper, in addition to the complete set of plot files, is accessible at~\url{https://github.com/SciCompMod/memilio-simulations}.}

\section*{Author Contributions}

    \noindent\textbf{Conceptualization:} Anna Wendler, Martin Kühn\\
    \noindent\textbf{Data Curation:} Anna Wendler, Lena Plötzke\\
    \noindent\textbf{Formal Analysis:} Anna Wendler, Lena Plötzke, Hannah Tritzschak, Martin Kühn\\
    \noindent\textbf{Funding Acquisition:} Martin Kühn\\
    \noindent\textbf{Investigation:} Anna Wendler, Lena Plötzke, Hannah Tritzschak, Martin Kühn\\
    \noindent\textbf{Methodology:} Anna Wendler, Lena Plötzke, Martin Kühn\\
    \noindent\textbf{Project Administration:} Martin Kühn\\
    \noindent\textbf{Resources:} Martin Kühn\\
    \noindent\textbf{Software:} Anna Wendler, Lena Plötzke, Hannah Tritzschak\\
    \noindent\textbf{Supervision:} Martin Kühn \\
    \noindent\textbf{Validation:} All authors \\
    \noindent\textbf{Visualization:} Anna Wendler, Lena Plötzke\\
    \noindent\textbf{Writing – Original Draft:} Anna Wendler, Lena Plötzke\\
    \noindent\textbf{Writing – Review \& Editing:} All authors

\bibliographystyle{elsarticle-num}

\begin{thebibliography}{10}
	\expandafter\ifx\csname url\endcsname\relax
	\def\url#1{\texttt{#1}}\fi
	\expandafter\ifx\csname urlprefix\endcsname\relax\def\urlprefix{URL }\fi
	\expandafter\ifx\csname href\endcsname\relax
	\def\href#1#2{#2} \def\path#1{#1}\fi
	
	\bibitem{giordano_modelling_2020}
	G.~Giordano, F.~Blanchini, R.~Bruno, P.~Colaneri, A.~Di~Filippo, A.~Di~Matteo,
	M.~Colaneri, Modelling the {COVID}-19 epidemic and implementation of
	population-wide interventions in {Italy}, Nature Medicine 26~(6) (2020)
	855--860.
	\newblock \href {https://doi.org/10.1038/s41591-020-0883-7}
	{\path{doi:10.1038/s41591-020-0883-7}}.
	
	\bibitem{bauer_relaxing_2021}
	S.~Bauer, S.~Contreras, J.~Dehning, M.~Linden, E.~Iftekhar, S.~B. Mohr,
	A.~Olivera-Nappa, V.~Priesemann, Relaxing restrictions at the pace of
	vaccination increases freedom and guards against further {COVID}-19 waves,
	PLoS computational biology 17~(9) (2021).
	\newblock \href {https://doi.org/10.1371/journal.pcbi.1009288}
	{\path{doi:10.1371/journal.pcbi.1009288}}.
	
	\bibitem{schuler_data_2021}
	L.~Schüler, J.~M. Calabrese, S.~Attinger, Data driven high resolution modeling
	and spatial analyses of the {COVID}-19 pandemic in {Germany}, PLOS ONE 16~(8)
	(Aug. 2021).
	\newblock \href {https://doi.org/10.1371/journal.pone.0254660}
	{\path{doi:10.1371/journal.pone.0254660}}.
	
	\bibitem{koslow_appropriate_2022}
	W.~Koslow, M.~J. Kühn, S.~Binder, M.~Klitz, D.~Abele, A.~Basermann,
	M.~Meyer-Hermann, Appropriate relaxation of non-pharmaceutical interventions
	minimizes the risk of a resurgence in {SARS}-{CoV}-2 infections in spite of
	the {Delta} variant, PLOS Computational Biology 18~(5) (May 2022).
	\newblock \href {https://doi.org/10.1371/journal.pcbi.1010054}
	{\path{doi:10.1371/journal.pcbi.1010054}}.
	
	\bibitem{contento_integrative_2023}
	L.~Contento, N.~Castelletti, E.~Raimúndez, R.~Le~Gleut, Y.~Schälte,
	P.~Stapor, L.~C. Hinske, M.~Hoelscher, A.~Wieser, K.~Radon, C.~Fuchs,
	J.~Hasenauer, Integrative modelling of reported case numbers and
	seroprevalence reveals time-dependent test efficiency and infectious
	contacts, Epidemics 43 (Jun. 2023).
	\newblock \href {https://doi.org/10.1016/j.epidem.2023.100681}
	{\path{doi:10.1016/j.epidem.2023.100681}}.
	
	\bibitem{zunker_novel_2024}
	H.~Zunker, R.~Schmieding, D.~Kerkmann, A.~Schengen, S.~Diexer, R.~Mikolajczyk,
	M.~Meyer-Hermann, M.~J. Kühn, Novel travel time aware metapopulation models
	and multi-layer waning immunity for late-phase epidemic and endemic
	scenarios, PLOS Computational Biology 20~(12) (Dez 2024).
	\newblock \href {https://doi.org/10.1371/journal.pcbi.1012630}
	{\path{doi:10.1371/journal.pcbi.1012630}}.
	
	\bibitem{merkt_long-term_2024}
	S.~Merkt, S.~Ali, E.~K. Gudina, W.~Adissu, A.~Gize, M.~Muenchhoff, A.~Graf,
	S.~Krebs, K.~Elsbernd, R.~Kisch, S.~S. Betizazu, B.~Fantahun, D.~Bekele,
	R.~Rubio-Acero, M.~Gashaw, E.~Girma, D.~Yilma, A.~Zeynudin, I.~Paunovic,
	M.~Hoelscher, H.~Blum, J.~Hasenauer, A.~Kroidl, A.~Wieser, Long-term
	monitoring of {SARS}-{CoV}-2 seroprevalence and variants in {Ethiopia}
	provides prediction for immunity and cross-immunity, Nature Communications
	15~(1) (Apr. 2024).
	\newblock \href {https://doi.org/10.1038/s41467-024-47556-2}
	{\path{doi:10.1038/s41467-024-47556-2}}.
	
	\bibitem{bershteyn_implementation_2018}
	A.~Bershteyn, J.~Gerardin, D.~Bridenbecker, C.~W. Lorton, J.~Bloedow, R.~S.
	Baker, G.~Chabot-Couture, Y.~Chen, T.~Fischle, K.~Frey, J.~S. Gauld, H.~Hu,
	A.~S. Izzo, D.~J. Klein, D.~Lukacevic, K.~A. McCarthy, J.~C. Miller, A.~L.
	Ouedraogo, T.~A. Perkins, J.~Steinkraus, Q.~A. ten Bosch, H.-F. Ting,
	S.~Titova, B.~G. Wagner, P.~A. Welkhoff, E.~A. Wenger, C.~N. Wiswell, {for
		the Institute for Disease Modeling}, Implementation and applications of
	{EMOD}, an individual-based multi-disease modeling platform, Pathogens and
	Disease 76~(5) (Jul. 2018).
	\newblock \href {https://doi.org/10.1093/femspd/fty059}
	{\path{doi:10.1093/femspd/fty059}}.
	
	\bibitem{kerr_covasim_2021}
	C.~C. Kerr, R.~M. Stuart, D.~Mistry, R.~G. Abeysuriya, K.~Rosenfeld, G.~R.
	Hart, R.~C. Núñez, J.~A. Cohen, P.~Selvaraj, B.~Hagedorn, L.~George,
	M.~Jastrzebski, A.~S. Izzo, G.~Fowler, A.~Palmer, D.~Delport, N.~Scott, S.~L.
	Kelly, C.~S. Bennette, B.~G. Wagner, S.~T. Chang, A.~P. Oron, E.~A. Wenger,
	J.~Panovska-Griffiths, M.~Famulare, D.~J. Klein, Covasim: {An} agent-based
	model of {COVID}-19 dynamics and interventions, PLOS Computational Biology
	17~(7) (2021) 1--32.
	\newblock \href {https://doi.org/10.1371/journal.pcbi.1009149}
	{\path{doi:10.1371/journal.pcbi.1009149}}.
	
	\bibitem{muller_predicting_2021}
	S.~A. Müller, M.~Balmer, W.~Charlton, R.~Ewert, A.~Neumann, C.~Rakow,
	T.~Schlenther, K.~Nagel, Predicting the effects of {COVID}-19 related
	interventions in urban settings by combining activity-based modelling,
	agent-based simulation, and mobile phone data, medRxiv (2021).
	\newblock \href {https://doi.org/10.1101/2021.02.27.21252583}
	{\path{doi:10.1101/2021.02.27.21252583}}.
	
	\bibitem{bracher_pre-registered_2021}
	J.~Bracher, D.~Wolffram, J.~Deuschel, K.~Görgen, J.~L. Ketterer, A.~Ullrich,
	S.~Abbott, M.~V. Barbarossa, D.~Bertsimas, S.~Bhatia, M.~Bodych, N.~I. Bosse,
	J.~P. Burgard, L.~Castro, G.~Fairchild, J.~Fuhrmann, S.~Funk, K.~Gogolewski,
	Q.~Gu, S.~Heyder, T.~Hotz, Y.~Kheifetz, H.~Kirsten, T.~Krueger, E.~Krymova,
	M.~L. Li, J.~H. Meinke, I.~J. Michaud, K.~Niedzielewski, T.~Ożański,
	F.~Rakowski, M.~Scholz, S.~Soni, A.~Srivastava, J.~Zieliński, D.~Zou,
	T.~Gneiting, M.~e. Schienle, A pre-registered short-term forecasting study of
	{COVID}-19 in {Germany} and {Poland} during the second wave, Nature
	Communications 12~(1) (08 2021).
	\newblock \href {https://doi.org/https://doi.org/10.1038/s41467-021-25207-0}
	{\path{doi:https://doi.org/10.1038/s41467-021-25207-0}}.
	
	\bibitem{KKN24}
	D.~Kerkmann, S.~Korf, K.~Nguyen, D.~Abele, A.~Schengen, C.~Gerstein, J.~H.
	Göbbert, A.~Basermann, M.~J. Kühn, M.~Meyer-Hermann, Agent-based modeling
	for realistic reproduction of human mobility and contact behavior to evaluate
	test and isolation strategies in epidemic infectious disease spread,
	Computers in Biology and Medicine 193 (2025) 110269.
	\newblock \href
	{https://doi.org/https://doi.org/10.1016/j.compbiomed.2025.110269}
	{\path{doi:https://doi.org/10.1016/j.compbiomed.2025.110269}}.
	
	\bibitem{bicker_hybrid_2025}
	J.~Bicker, R.~Schmieding, M.~Meyer-Hermann, M.~J. Kühn, Hybrid metapopulation
	agent-based epidemiological models for efficient insight on the individual
	scale: A contribution to green computing, Infectious Disease Modelling 10~(2)
	(2025) 571--590.
	\newblock \href {https://doi.org/https://doi.org/10.1016/j.idm.2024.12.015}
	{\path{doi:https://doi.org/10.1016/j.idm.2024.12.015}}.
	
	\bibitem{maier_hybrid_2024}
	K.~Maier, M.~Weiser, T.~Conrad, Hybrid {PDE}-{ODE} {Models} for {Efficient}
	{Simulation} of {Infection} {Spread} in {Epidemiology}, arXiv:2405.12938
	[math] (Jun. 2024).
	\newblock \href {https://doi.org/10.48550/arXiv.2405.12938}
	{\path{doi:10.48550/arXiv.2405.12938}}.
	
	\bibitem{memonAutomatedProcessingPipelines2024}
	S.~Memon, J.~F. Jadebeck, M.~Osthege, A.~Wendler, D.~Kerkmann, H.~Zunker,
	W.~Wiechert, K.~Nöh, J.~H. Göbbert, B.~Hagemeier, M.~Riedel, M.~J. Kühn,
	Automated {{Processing}} of {{Pipelines Managing Now-}} and {{Forecasting}}
	of {{Infectious Diseases}}, in: 2024 47th {{MIPRO ICT}} and {{Electronics
			Convention}} ({{MIPRO}}), IEEE, 2020, pp. 1157--1162.
	\newblock \href {https://doi.org/10.1109/MIPRO60963.2024.10569336}
	{\path{doi:10.1109/MIPRO60963.2024.10569336}}.
	
	\bibitem{betzESIDExploringDesign2023a}
	P.~K. Betz, J.~Stoll, V.~Grappendorf, J.~Gilg, M.~Zeumer, M.~Klitz, L.~Spataro,
	A.~Klein, L.~Rothenhaeusler, H.~Bohnacker, H.~Kraemer, M.~Meyer-Hermann,
	S.~Somogyi, A.~Gerndt, M.~J. Kühn, {{ESID}}: {{Exploring}} the {{Design}}
	and {{Development}} of a {{Visual Analytics Tool}} for {{Epidemiological
			Emergencies}}, in: 2023 {{IEEE VIS Workshop}} on {{Visualization}} for
	{{Pandemic}} and {{Emergency Responses}} ({{Vis4PandEmRes}}), IEEE, 2023, pp.
	8--14.
	\newblock \href {https://doi.org/10.1109/Vis4PandEmRes60343.2023.00007}
	{\path{doi:10.1109/Vis4PandEmRes60343.2023.00007}}.
	
	\bibitem{wearing_appropriate_2005}
	H.~J. Wearing, P.~Rohani, M.~J. Keeling, Appropriate {Models} for the
	{Management} of {Infectious} {Diseases}, PLoS Medicine 2~(7) (2005) e174.
	\newblock \href {https://doi.org/10.1371/journal.pmed.0020174}
	{\path{doi:10.1371/journal.pmed.0020174}}.
	
	\bibitem{donofrio_mixed_2004}
	A.~d’Onofrio, Mixed pulse vaccination strategy in epidemic model with
	realistically distributed infectious and latent times, Applied Mathematics
	and Computation 151~(1) (2004) 181--187.
	\newblock \href {https://doi.org/10.1016/S0096-3003(03)00331-X}
	{\path{doi:10.1016/S0096-3003(03)00331-X}}.
	
	\bibitem{faesTimeSymptomOnset2020}
	C.~Faes, S.~Abrams, D.~Van~Beckhoven, G.~Meyfroidt, E.~Vlieghe, N.~Hens,
	{Belgian Collaborative Group on COVID-19 Hospital Surveillance Belgian
		Collaborative Group on COVID-19 Hospital Surveillance}, Time between
	{{Symptom Onset}}, {{Hospitalisation}} and {{Recovery}} or {{Death}}:
	{{Statistical Analysis}} of {{Belgian COVID-19 Patients}}, International
	Journal of Environmental Research and Public Health 17~(20) (2020) 7560.
	\newblock \href {https://doi.org/10.3390/ijerph17207560}
	{\path{doi:10.3390/ijerph17207560}}.
	
	\bibitem{challenMetaanalysisSevereAcute2022}
	R.~Challen, E.~Brooks-Pollock, K.~Tsaneva-Atanasova, L.~Danon, Meta-analysis of
	the severe acute respiratory syndrome coronavirus 2 serial intervals and the
	impact of parameter uncertainty on the coronavirus disease 2019 reproduction
	number, Statistical Methods in Medical Research 31~(9) (2022) 1686--1703.
	\newblock \href {https://doi.org/10.1177/09622802211065159}
	{\path{doi:10.1177/09622802211065159}}.
	
	\bibitem{madewellSerialIntervalIncubation2023}
	Z.~J. Madewell, K.~Charniga, N.~B. Masters, J.~Asher, L.~Fahrenwald, W.~Still,
	J.~Chen, N.~Kipperman, D.~Bui, M.~Shea, K.~Saunders, L.~Saathoff-Huber,
	S.~Johnson, K.~Harbi, A.~L. Berns, T.~Perez, E.~Gateley, I.~H. Spicknall,
	Y.~Nakazawa, T.~L. Gift, {2022 Mpox Outbreak Response Team2}, Serial
	{{Interval}} and {{Incubation Period Estimates}} of {{Monkeypox Virus
			Infection}} in 12 {{Jurisdictions}}, {{United States}}, {{May}}–{{August}}
	2022, Emerging Infectious Diseases 29~(4) (2023) 818--821.
	\newblock \href {https://doi.org/10.3201/eid2904.221622}
	{\path{doi:10.3201/eid2904.221622}}.
	
	\bibitem{brauer_age_2009}
	F.~Brauer, J.~Watmough, Age of infection epidemic models with heterogeneous
	mixing, Journal of Biological Dynamics 3~(2-3) (2009) 324--330.
	\newblock \href {https://doi.org/10.1080/17513750802415822}
	{\path{doi:10.1080/17513750802415822}}.
	
	\bibitem{kermack_contribution_1927}
	W.~O. Kermack, A.~G. McKendrick, G.~T. Walker, A contribution to the
	mathematical theory of epidemics, Proceedings of the Royal Society of London.
	Series A, Containing Papers of a Mathematical and Physical Character
	115~(772) (1927) 700--721.
	\newblock \href {https://doi.org/10.1098/rspa.1927.0118}
	{\path{doi:10.1098/rspa.1927.0118}}.
	
	\bibitem{diekmann_legacy_1995}
	O.~Diekmann, J.~A.~P. Heesterbeek, J.~A. Metz,
	\href{https://pure.iiasa.ac.at/4339}{The legacy of {Kermack} and
		{McKendrick}}, Publications of the Newton Institute Epidemic models: their
	structure and relation to data (1995) 95--115.
	\newline\urlprefix\url{https://pure.iiasa.ac.at/4339}
	
	\bibitem{messina_non-standard_2022}
	E.~Messina, M.~Pezzella, A.~Vecchio, A non-standard numerical scheme for an
	age-of-infection epidemic model, Journal of Computational Dynamics 9~(2)
	(2022) 239.
	\newblock \href {https://doi.org/10.3934/jcd.2021029}
	{\path{doi:10.3934/jcd.2021029}}.
	
	\bibitem{brauer_mathematical_2019}
	F.~Brauer, C.~Castillo-Chavez, Z.~Feng, Mathematical {Models} in
	{Epidemiology}, Vol.~69 of Texts in {Applied} {Mathematics}, Springer New
	York, New York, NY, 2019.
	\newblock \href {https://doi.org/10.1007/978-1-4939-9828-9}
	{\path{doi:10.1007/978-1-4939-9828-9}}.
	
	\bibitem{baiAgeofinfectionModelBoth2023}
	F.~Bai, An age-of-infection model with both symptomatic and asymptomatic
	infections, Journal of Mathematical Biology 86~(5) (2023) 82.
	\newblock \href {https://doi.org/10.1007/s00285-023-01920-w}
	{\path{doi:10.1007/s00285-023-01920-w}}.
	
	\bibitem{barrilReproductionNumberAge2021}
	C.~Barril, A.~Calsina, S.~Cuadrado, J.~Ripoll, Reproduction number for an age
	of infection structured model, Mathematical Modelling of Natural Phenomena 16
	(2021) 42.
	\newblock \href {https://doi.org/10.1051/mmnp/2021033}
	{\path{doi:10.1051/mmnp/2021033}}.
	
	\bibitem{demongeotKermackMcKendrickModel2023}
	J.~Demongeot, Q.~Griette, Y.~Maday, P.~Magal, A {{Kermack}}–{{McKendrick}}
	model with age of infection starting from a single or multiple cohorts of
	infected patients, Proceedings of the Royal Society A: Mathematical, Physical
	and Engineering Sciences 479~(2272) (2023) 20220381.
	\newblock \href {https://doi.org/10.1098/rspa.2022.0381}
	{\path{doi:10.1098/rspa.2022.0381}}.
	
	\bibitem{kuhn_assessment_2021}
	M.~J. Kühn, D.~Abele, T.~Mitra, W.~Koslow, M.~Abedi, K.~Rack, M.~Siggel,
	S.~Khailaie, M.~Klitz, S.~Binder, L.~Spataro, J.~Gilg, J.~Kleinert,
	M.~Häberle, L.~Plötzke, C.~D. Spinner, M.~Stecher, X.~X. Zhu, A.~Basermann,
	M.~Meyer-Hermann, Assessment of effective mitigation and prediction of the
	spread of {SARS}-{CoV}-2 in {Germany} using demographic information and
	spatial resolution, Mathematical Biosciences (2021).
	\newblock \href {https://doi.org/10.1016/j.mbs.2021.108648}
	{\path{doi:10.1016/j.mbs.2021.108648}}.
	
	\bibitem{memilio121}
	M.~J. Kühn, D.~Abele, D.~Kerkmann, S.~Korf, H.~Zunker, A.~Wendler, J.~Bicker,
	K.~Nguyen, R.~Schmieding, L.~Plötzke, P.~Lenz, M.~Betz, C.~Gerstein,
	A.~Schmidt, R.~Hannemann-Tamas, N.~Waßmuth, P.~Johannssen, H.~Tritzschak,
	M.~Klitz, W.~Koslow, S.~Binder, M.~Siggel, J.~Kleinert, K.~Rack, A.~Lutz,
	M.~Meyer-Hermann, {MEmilio v1.2.1 - A high performance Modular EpideMIcs
		simuLatIOn software}, \url{https://elib.dlr.de/201660/} (Aug. 2024).
	\newblock \href {https://doi.org/10.5281/zenodo.13341171}
	{\path{doi:10.5281/zenodo.13341171}}.
	
	\bibitem{dey_lag_2021}
	T.~Dey, J.~Lee, S.~Chakraborty, J.~Chandra, A.~Bhaskar, K.~Zhang, A.~Bhaskar,
	F.~Dominici, Lag time between state-level policy interventions and change
	points in {COVID}-19 outcomes in the {United} {States}, Patterns 2~(8) (2021)
	100306.
	\newblock \href {https://doi.org/10.1016/j.patter.2021.100306}
	{\path{doi:10.1016/j.patter.2021.100306}}.
	
	\bibitem{guglielmi_identification_2023}
	N.~Guglielmi, E.~Iacomini, A.~Viguerie, Identification of time delays in
	{COVID}-19 data, Epidemiologic Methods 12~(1) (2023) 20220117.
	\newblock \href {https://doi.org/10.1515/em-2022-0117}
	{\path{doi:10.1515/em-2022-0117}}.
	
	\bibitem{RKI_data_2023}
	{Robert Koch-Institut}, {SARS-CoV-2} {I}nfektionen in {D}eutschland (2024).
	\newblock \href {https://doi.org/10.5281/zenodo.4681153}
	{\path{doi:10.5281/zenodo.4681153}}.
	
	\bibitem{divi2024}
	{Robert Koch-Institut}, {Intensivkapazitäten und
		COVID-19-Intensivbettenbelegung in Deutschland} (2024).
	\newblock \href {https://doi.org/10.5281/zenodo.13236164}
	{\path{doi:10.5281/zenodo.13236164}}.
	
	\bibitem{regionaldatenbank_deutschland_fortschreibung}
	{Regionaldatenbank Deutschland},
	\href{https://www.regionalstatistik.de/genesis//online?operation=table&code=12411-04-02-4-B&bypass=true&levelindex=1&levelid=1721805645378#abreadcrumb}{Fortschreibung
		des {Bevölkerungsstandes}: 12411-04-02-4-b {Bevölkerung} nach {Geschlecht}
		und {Altersjahren} (79) - {Stichtag} 31.12. - (ab 2011) regionale {Ebenen}},
	key date used: 31.12.2020 (2024).
	\newline\urlprefix\url{https://www.regionalstatistik.de/genesis//online?operation=table&code=12411-04-02-4-B&bypass=true&levelindex=1&levelid=1721805645378#abreadcrumb}
	
\end{thebibliography}

\newpage
\pagenumbering{roman}
\appendix
\FloatBarrier

\section{Relation between IDE and ODE model}\label{app:ide_ode}

In this section, we show that our proposed model using integro-differential equations is indeed a generalization of the ODE model as proposed in~\cite{kuhn_assessment_2021} if stay times of the ODE model were only dependent on the starting compartment, i.e., for instance, $T_C=T_C^I=T_C^R$. In the following, we show that using exponential distributions to describe the transitions between compartments in the IDE model yields the corresponding simplification to the ODE model.

The ODE-SECIR model of~\cite{kuhn_assessment_2021} without age resolution is given by
\begingroup
\allowdisplaybreaks[3]
\begin{align} \label{eq:ode_secir}
\begin{aligned}
   S'(t) &= 
    - \frac{S(t)}{N-D(t)}\, \phi(t)\,\AW{\rho}\, \Big( \AW{\xi_C}\, C(t)+ \AW{\xi_I}\, I(t)\Big),\\ 
    E'(t) &= 
    \frac{S(t)}{N-D(t)}\, \phi(t)\,\AW{\rho}\,\Big( \AW{\xi_C}\, C(t)+ \AW{\xi_I}\, I(t)\Big) 
    - \frac{1}{T_E}\, E(t),\\ 
    C'(t) &= 
    \frac{1}{T_{E}}\,E(t)  
    - \frac{1}{T_{C}}\, C(t),\\
    I'(t) &= 
    \frac{{\mu_{C}^{I}}}{T_{C}} \,C(t) 
    -\frac{1}{T_{I}}\, I(t) ,\\    
    H'(t) &=
     \frac{\mu_{I}^{H}}{T_{I}}\, I(t)
     - \frac{1}{T_{H}}\, H(t),\\
     U'(t) &= 
     \frac{\mu_{H}^{U}}{T_{H}}\, H(t)
     - \frac{1}{T_{U}} \,U(t) ,\\
     R'(t)&= 
     \frac{{1-\mu_{C}^{I}}}{T_{C}}\, C(t)
     + \frac{1 - \mu_{I}^{H}}{T_{I}}\,I(t)
     + \frac{1-\mu_{H}^{U}}{T_{H}}\, H(t)
     + \frac{1 - \mu_{U}^{D}}{T_{U}}\, U(t) ,\\
     D'(t) &=
     \frac{\mu_{U}^{D}}{T_{U}}\, U(t).
\end{aligned}
\end{align}
\endgroup
where $T_{z_1}$ is the average stay time in compartment $z_1\in\mathcal{Z}$ and the meaning of the other parameters is as before. Note that~\cite{kuhn_assessment_2021} allowed setting stay times dependent on start and destination compartment $z_1$ and $z_2$. However, if $T_{z_1}^{z_2}$ and $T_{z_1}^{z_3}$ differ for $z_2\neq z_3$, then $\mu_{z_1}^{z_2}$ looses its meaning as probability since the Markov property of the system does not consider transition histories. The parameter $\xi_I$ has been denoted $\widetilde{\beta}$ in~\cite{kuhn_assessment_2021} and $\xi_C$ had only been mentioned in~\cite{koslow_appropriate_2022}, as it was set to one in the prior publications. 

To verify that the IDE model~\eqref{eq:IDESECIR} reduces to the ODE model~\eqref{eq:ode_secir} under a certain choice of parameters, we first set $\xi_C(\tau) = \xi_C$\AW{, }$\xi_I(\tau) = \xi_I$ \AW{and $\rho_C(\tau)=\rho_I(\tau)=\rho$} in the IDE model. For the transition distributions, we choose 
\begin{align}\label{eq:ide_ode_gamma}
    \gamma_{z_1}^{z_2}(\tau) = \exp(-\frac{1}{T_{z_1}}\, \tau),
\end{align}
regardless of $z_2$ and where $T_{z_1}$ is the mean stay time in compartment $z_1$ as introduced for the ODE model. Here,
\begin{align*}
    1-\gamma_{z_1}^{z_2}(\tau) = 1-\exp(-\frac{1}{T_{z_1}}\, \tau)
\end{align*} is indeed a CDF, i.e., the CDF function of the exponential distribution with parameter $(T_{z_1})^{-1}$. Hence, this choice for $\gamma_{z_1}^{z_2}$ is valid for the IDE model. Note that
\begin{align}\label{eq:ide_ode_gamma_ableitung}
    {\gamma_{z_1}^{z_2}}'(\tau) = -\frac{1}{T_{z_1}} \exp(-\frac{1}{T_{z_1}}\tau) = -\frac{1}{T_{z_1}} \, \gamma_{z_1}^{z_2}(\tau).
\end{align}
Based on these assumptions, we have, e.g., $\gamma_C^I = \gamma_C^R$, so that the stay time distribution in compartment $C$ is identical for all individuals. 
The same holds for the subsequent compartments.

We will now demonstrate that, under the specified parameterization of the IDE model, the aforementioned model is in fact equivalent to the ODE model.
The equation for $S'$ will serve as a starting point. With the chosen parameters, we obtain
\begin{align*}
    S'(t) &\overset{\eqref{eq:IDESECIR}}{=}  -\frac{S(t)}{N-D(t)}\, \phi(t)\,  
    \int_{-\infty}^t \xi_C(t-x)\,\AW{\rho_C(t-x)} \left(\mu_C^I\,\gamma_C^I(t-x)+\left(1-\mu_C^I\right)\gamma_C^R(t-x)\right) \sigma_E^C(x) \\
        & \hspace{3.1cm}  + \xi_I(t-x)\,\AW{\rho_I(t-x)} \left(\mu_I^H\,\gamma_I^H(t-x)+\left(1-\mu_I^H\right)\gamma_I^R(t-x)\right) \sigma_C^I(x) \, \d x
        \\
        &= -\frac{S(t)}{N-D(t)}\, \phi(t)\,  \rho\, \Big(\xi_C\int_{-\infty}^t  \left(\mu_C^I\,\gamma_C^I(t-x)+\left(1-\mu_C^I\right)\gamma_C^R(t-x)\right) \sigma_E^C(x) \, \d x \\
        &\hspace{2.6cm} +\xi_I\int_{-\infty}^t  \left(\mu_I^H\,\gamma_I^H(t-x)+\left(1-\mu_I^H\right)\gamma_I^R(t-x)\right) \sigma_C^I(x) \, \d x \Big)
       \\
       &\overset{\eqref{eq:IDESECIR}}{=}-\frac{S(t)}{N-D(t)}\, \phi(t)\, \rho\,\Big(\xi_C \,C(t)+\xi_I\, I(t)\Big).
\end{align*}
Hence, we get the equation for $S'$ of the ODE model once we show that the equations for compartments $C$ and $I$ of the IDE model coincide with those of the ODE model. 

We proceed by inserting our chosen transition distribution into the equation for $E$ of the IDE model and demonstrating that it reduces to the corresponding equation of the ODE model.
First, we make the observation that, using~\eqref{eq:ide_ode_gamma_ableitung}, the flow $\sigma_E^C(t)$ can be expressed as 
\begin{align}
\begin{aligned}\label{eq:ODE_sigma_EC}
    \sigma_E^C(t) &= - \int_{-\infty}^{t} {\gamma_{E}^{C}}'(t-x)\,\sigma_{S}^{E}(x)\ \d x =  \frac{1}{T_E} \, \int_{-\infty}^{t} {\gamma_{E}^{C}}(t-x)\,\sigma_{S}^{E}(x)\ \d x  \\
    &= \frac{1}{T_E} \, E(t).
\end{aligned}
\end{align}
Therefore, through~\eqref{eq:IDEAblKomp}, it holds that
\begin{align*}
    E'(t) &= \sigma_S^E(t) - \sigma_E^C(t)\\
    &=  - S'(t) - \frac{1}{T_E} \, E(t).
\end{align*}

For compartment $C$, we start again with an observation on the respective transitions $\sigma_C^I$ and $\sigma_C^R$. Note that inserting~\eqref{eq:ide_ode_gamma_ableitung} yields
\begin{align}
\begin{aligned}\label{eq:ODE_sigma_CI}
    \sigma_C^I(t) &=  - \int_{-\infty}^{t} {\gamma_{C}^{I}}'(t-x)\,\mu_{C}^{I}\,\sigma_{E}^{C}(x)\, \d x = \frac{\mu_C^I}{T_C} \int_{-\infty}^{t} {\gamma_{C}^{I}}(t-x)\,\sigma_{E}^{C}(x)\, \d x \\
    &= \frac{\mu_C^I}{T_C}\, C(t),
    \end{aligned}
    \end{align}
and
\begin{align}
\begin{aligned}\label{eq:ODE_sigma_CR}
    \sigma_C^R(t) &=  - \int_{-\infty}^{t} {\gamma_{C}^{R}}'(t-x)\left(1-\mu_{C}^{I}\right)\sigma_{E}^{C}(x)\, \d x = \frac{1-\mu_C^I}{T_C} \int_{-\infty}^{t} {\gamma_{C}^{R}}(t-x)\,\sigma_{E}^{C}(x)\, \d x \\
    &= \frac{1-\mu_C^I}{T_C}\, C(t).
    \end{aligned}
    \end{align}
By applying~\eqref{eq:ODE_sigma_EC},  \eqref{eq:ODE_sigma_CI} and~\eqref{eq:ODE_sigma_CR} to compartment $C$ from equation~\eqref{eq:IDEAblKomp}, we get
\begin{align*}
    C'(t) &= \sigma_{E}^{C}(t)-\sigma_{C}^{I}(t)-\sigma_{C}^{R}(t)\\
    &= \frac{1}{T_E} \, E(t) - \frac{\mu_C^I}{T_C} \, C(t) -  \frac{1-\mu_C^I}{T_C}\, C(t)= \frac{1}{T_E} \, E(t) - \frac{1}{T_C} \, C(t).
\end{align*}
The results obtained regarding the compartment $C$ and the associated transitions $\sigma_C^I$ and $\sigma_C^R$ can be equally applied to the remaining compartments. By doing so, we can find the equations for the remaining compartments of the ODE model analogously. In particular, the equations for the compartments $C$ and $I$ are identical for the ODE and the IDE model. This implies that the equation for $S'$ of the models coincide under the given assumptions. This demonstrates that the ODE model is indeed a special case of the IDE model.



\section{Parameters used to demonstrate the convergence order of the IDE model}\label{app:parameters_IDEODE}
In this section, we present the parameters that have been used to demonstrate the convergence order of the IDE model. The parameters are intended to be chosen such that the continuous versions of the ODE and the IDE models are equivalent. Note that these parameters are not intended to model a realistic scenario, but to numerically compare IDE and ODE models. The epidemiological parameters used for the ODE and the IDE model are depicted in~\cref{tab:ide_ode_parameters}. For the IDE model, we use appropriate exponentially distributed stay times, compare also~\ref{app:ide_ode}. This means, that we choose
\begin{align*}\label{eq:ide_ode_gamma}
    \gamma_{z_1}^{z_2}(\tau) = \exp(-\frac{1}{T_{z_1}}\, \tau)
\end{align*} with values $T_{z_1}$ from~\cref{tab:ide_ode_parameters} for appropriate $z_1\in\mathcal{Z}$, see also~\eqref{eq:ide_ode_gamma}.
Furthermore, we \AW{choose} the parameters $\xi_C$, $\xi_I$\AW{, $\rho_C$, and $\rho_I$} \AW{to be constant} so that we have

\begin{align*}
    \xi_C(\tau) = \xi_C \text{\AW{,}} \quad \xi_I(\tau) = \xi_I \quad \AW{\text{and}\quad \rho_C(\tau)=\rho_I(\tau)=\rho}
\end{align*}
for all $\AW{\tau} >0$. Using these parameters, the continuous versions of both models are equivalent. The initial distribution vector 
\begin{align*}
    (S_0, E_0, C_0, I_0, H_0, U_0, R_0, D_0) = (9945, 20, 20, 3, 1, 1, 10, 0)
\end{align*}
is used to initialize the ODE model.
\begin{table}[hbt]
    \centering
    \renewcommand{\arraystretch}{1.1}
    \begin{tabular}{||c|c||c|c||c|c||}
        $T_E$ & $1.4$ & $\mu_C^I$ & $0.5$ & $\phi$ & $1.0$\\
        $T_C$ & $1.2$ & $\mu_I^H$ & $0.5$ & $\rho$& $1.0$\\
        $T_I$ & $0.3$ & $\mu_H^U$ & $0.5$ & $\xi_C$& $1.0$\\
        $T_H$ & $0.3$ & $\mu_U^D$ & $0.5$ & $\xi_I$& $1.0$\\
        $T_U$ & $0.3 $
    \end{tabular}
    \caption{\textbf{Epidemiological parameters used to examine the convergence order.} This choice of parameters is not intended to model a realistic scenario and is applied to both the IDE and the ODE model. }
    \label{tab:ide_ode_parameters}
\end{table}

\section{Parameters to model SARS-CoV-2 dynamics}\label{app:parameters_SARS}
In the following section, we specify the parameters that we use for the IDE and the ODE model to simulate the dynamics of SARS-CoV-2 over a selected time horizon in Germany. We mainly use the results from~\cite{kerr_covasim_2021} and~\cite{kuhn_assessment_2021} and adapt them to our required parameters if necessary.

We need to specify stay time distributions for the IDE model, which are depicted in~\cref{tab:covasim_distributions}. The results are directly taken from~\cite[Table~1]{kerr_covasim_2021} and are repeated here for the sake of completeness and adapted to our notation.

\begin{table}[htb]
    \centering
    \renewcommand{\arraystretch}{1.1}
    \begin{tabular}{c|c}
     Survival function &  Distribution \\
     \hline
     $\gamma_E^C$ & lognormal($4.5$, $1.5$)  \\
     $\gamma_C^I$ & lognormal($1.1$, $0.9$) \\
     $\gamma_C^R$ & lognormal($8.0$, $2.0$)\\
     $\gamma_I^H$ & lognormal($6.6$, $4.9$)\\
     $\gamma_I^R$ & lognormal($8.0$, $2.0$)\\
     $\gamma_H^U$ & lognormal($1.5$, $2.0$)\\
     $\gamma_H^R$ & lognormal($18.1$, $6.3$)\\
     $\gamma_U^D$ & lognormal($10.7$, $4.8$)\\
     $\gamma_U^R$ & lognormal($18.1$, $6.3$)\\
\end{tabular}
\caption{\textbf{Stay time distributions for COVID-19.} The notation is to be understood in such a way that $\gamma_{z_1}^{z_2}$ with $z_1, z_2\in\mathcal{Z}$ follows the corresponding survival function of the lognormal distribution with the parameters (mean, standard deviation). See also~\cite[Table~1]{kerr_covasim_2021}.}\label{tab:covasim_distributions}
\end{table}

The transition probabilities are calculated using~\cite[Table~2]{kuhn_assessment_2021}. 
Since the values there are given per age group, we weight the values with the relative share of the age group in the total population in order to obtain an average value. The weighting is based on the data from~\cite{regionaldatenbank_deutschland_fortschreibung} for $2020$. The value $\mu_U^D$ is calculated using~\cite[Table~2]{kerr_covasim_2021} as \AW{with} this value\AW{, the simulation results better match the reported data}. The source specifies the infection fatality ratio instead of $\mu_U^D$. We can derive our required value from the specified ones by division and weight afterwards again by the relative share of the age group in the total population.
The values resulting from the calculation are shown in~\cref{tab:realistic_prob}.
\begin{table}[hbt]
    \centering
    \renewcommand{\arraystretch}{1.1}
    \begin{tabular}{c|c}
     Transition probability & Value \\
     \hline
    $\mu_C^I$ &  $0.793099$\\
    $\mu_I^H$ & $0.078643$\\
    $\mu_H^U$ & $0.173176$\\
    $\mu_U^D$ &  $0.387803$\\
\end{tabular}
\caption{\textbf{Transition probabilities for COVID-19.} The probabilities are calculated using~\cite[Table~2]{kuhn_assessment_2021} and~\cite[Table~2]{kerr_covasim_2021} and are used for both model types.}\label{tab:realistic_prob}
\end{table}

In the IDE model, we have stay time distributions for the compartments depending on which compartment they are transitioning to in the future. In the ODE model, we do not have this distinction and only have one mean stay time for the respective compartment. Consequently, we set the mean stay times for the compartments of the ODE model by computing a weighted average using the calculated transition probabilities from~\cref{tab:realistic_prob} and the respective mean stay times from the IDE model as given in~\cref{tab:covasim_distributions}. The result can be found in~\cref{tab:realistic_time_ode}.

The transmission probability is assumed to be constant over time and is set to $\AW{\rho_C=\rho_I}=0.0733271$. This value is calculated using~\cite[Table~2]{kuhn_assessment_2021} and again weighted by the share of the age group in the total population of Germany. Additionally, we set $\xi_C=1$ and $\xi_I=0.3$, which are also constant over time. That means, we assume nonsymptomatic individuals to \AW{not} isolate. For symptomatic individuals, we expect them to isolate more frequently.

For simulating change points as described in~\cref{sec:changepoints}, we set the initial contact rate to $\phi(t) = 3.114219$ which results in a constant number of new transmissions in the beginning of the simulation. After two days, we double/half the contact rate.

In the COVID-19 inspired real scenario, we set the initial contact rate $\phi(t) = 7.69129$ for Oct 1, 2020. To simulate the implementation of NPIs, we decrease the contact rate to $\phi(t) = 3.51782$ on Oct 24, 2020. 

\begin{table}[hbt]
    \centering
    \begin{tabular}{c|c}
     Mean stay time & Value \\
     \hline
    $T_E$ & $4.5$\\
    $T_C$ & $2.527617$\\
    $T_I$ & $7.889900$\\
    $T_H$ & $15.225278$\\
    $T_U$ & $15.230258$
\end{tabular}
\caption{\textbf{Average stay time in the compartments for COVID-19.} These mean stay times are used in the ODE model and are calculated using the means of the distributions in~\cref{tab:covasim_distributions} and the probabilities in~\cref{tab:realistic_prob}.}\label{tab:realistic_time_ode}
\end{table}

\end{document}